\documentclass[11pt,a4paper]{article}
\usepackage[T1]{fontenc}
\usepackage{lmodern,microtype}
\usepackage[margin=27mm]{geometry}
\usepackage{amsmath,amssymb,amsthm,mathtools}
\usepackage[numbers,square,sort&compress]{natbib}
\usepackage[hidelinks]{hyperref}
\usepackage{xcolor}
\hypersetup{pdftitle={An Upper Bound for the Mean Speed of Transition Fronts and Unbounded Front Widths for Fisher-KPP Equations in Almost Periodic Media}}

\newcommand{\R}{\mathbb R}
\newcommand{\Z}{\mathbb Z}
\newcommand{\N}{\mathbb N}
\newcommand{\Q}{\mathbb Q}
\newcommand{\T}{\mathbb T}
\newcommand{\Sspace}{\mathcal S}
\newcommand{\D}{\mathcal D}
\newcommand{\Lop}{\mathcal L}
\newcommand{\hull}{H(a)}
\newcommand{\amin}{\underline a}
\newcommand{\amax}{\overline a}
\newcommand{\wmax}{\widehat w}

\newcommand{\ls}{x^-_{1-s}}
\newcommand{\rs}{x^+_s}
\newcommand{\mass}[1]{\int_{\Sspace}#1\,dm}
\newcommand{\ip}[2]{\langle #1,#2\rangle}
\newcommand{\norm}[1]{\lVert #1\rVert}
\DeclareMathOperator{\supp}{supp}
\numberwithin{equation}{section}
\newtheorem{theorem}{Theorem}[section]
\newtheorem{lemma}[theorem]{Lemma}
\newtheorem{proposition}[theorem]{Proposition}
\newtheorem{corollary}[theorem]{Corollary}
\theoremstyle{definition}
\newtheorem{definition}[theorem]{Definition}
\theoremstyle{remark}
\newtheorem{remark}[theorem]{Remark}
\allowdisplaybreaks[1]
\title{An Upper Bound for the Mean Speed of Transition Fronts and Unbounded Front Widths for Fisher--KPP Equations in Almost Periodic Media}
\author{Xing Liang \thanks{School of Mathematical Sciences, University of Science and Technology of China, Hefei, Anhui 230026, China,
\textit{Email address}: \texttt{xliang@ustc.edu.cn}; supported by NSFC 12331006 and 12531008, corresponding author} 
\quad Linfeng Xu \thanks{School of Mathematical Sciences, University of Science and Technology of China, Hefei, Anhui 230026, China,
\textit{Email address}: \texttt{xlf\_hzfy@mail.ustc.edu.cn}}
\quad Qi Zhou \thanks{Chern Institute of Mathematics and LPMC, Nankai University, Tianjin
300071, China,
\textit{Email address}: \texttt{qizhou@nankai.edu.cn}, supported by NSFC 12531006 and 12526201 and Nankai Zhide Foundation} 
\quad Tao Zhou \thanks{Center for Pure Mathematics, School of Mathematical Science, Anhui University, Hefei, Anhui 230601, China,
\textit{Email address}: \texttt{tzhou910@ustc.edu.cn}, supported by NSFC 12471150} }
\date{}
\begin{document}
\maketitle

\begin{abstract}
In this paper, we investigate transition fronts and spreading solutions of Fisher--KPP equations in one-dimensional almost periodic media, both on the real line and on the lattice. Let $\lambda_1$ be the supremum of the spectrum of the linearized operator acting on $L^2(\R)$ or $\ell^2(\Z)$, respectively, and let 
\(L(\lambda_1)\) denote the spatial Lyapunov exponent at the spectral parameter \(\lambda_1\). We prove that, if $L(\lambda_1)>0$, the global mean speed of any transition front is at most $\lambda_1/L(\lambda_1)$. Moreover, if a solution of the Cauchy problem spreads faster than this bound, its transition width is unbounded along a sequence of times. This occurs for a class of initial data with slowly decaying exponential tails. This contrasts with periodic media, where pulsating fronts
exist at every speed above the minimal speed.
As applications, we consider almost Mathieu coefficients and a continuous quasiperiodic coefficient with two frequencies. Together with the results of Nadin--Rossi \citep{NR17}
and Liang--Wang--Zhou--Zhou \citep{LWZZ24},
our results provide an almost complete picture of the
admissible speeds of generalized transition fronts,
while leaving the critical cases open. \end{abstract}
\noindent\textbf{Keywords.} Fisher--KPP equation; almost periodic media; transition front; spreading speed; Lyapunov exponent.\\
\textbf{2020 Mathematics Subject Classification.} 35K57, 35B15, 35B40, 39A12.

\section{Introduction}

\subsection{Background and the main questions}

We consider the Fisher--KPP equation
\begin{equation}\label{eq:intromodel}
 u_t=\D u+a(x)u(1-u),\qquad x\in\Sspace,
\end{equation}
where $\Sspace=\R$ or $\Z$, $a$ is an almost periodic function with $\inf a>0$, and
\[
 \D u(x)=
 \begin{cases}
 u_{xx}(x),&\Sspace=\R,\\
 u(x+1)+u(x-1)-2u(x),&\Sspace=\Z.
 \end{cases}
\]
Our purpose is to study the relation between the speeds of transition fronts and the spreading of solutions with slowly decaying initial data.

Since the works of Fisher and Kolmogorov, Petrovskii and Piskunov \citep{Fisher,KPP}, traveling fronts and spreading speeds have been studied extensively. For $a\equiv a_0>0$, equation \eqref{eq:intromodel} on $\R$ has a traveling front $u(t,x)=U(x-wt)$ connecting $1$ to $0$ if and only if $w\geq2\sqrt{a_0}$. The corresponding minimal speed on $\Z$ is
\[
 \inf_{p>0}\frac{2(\cosh p-1)+a_0}{p}.
\]
{In periodic media, traveling fronts are replaced by pulsating
	fronts of the form $u(t,x)=U(x-wt,x)$, with $U$ periodic in
	its second variable. Early developments include the work of
	Shigesada, Kawasaki and Teramoto \citep{SKT86} on population
	spread and that of Xin \citep{Xin92} on flame propagation.
	For periodic Fisher--KPP equations, pulsating fronts exist
	at and above a minimal speed \citep{BH02,GH06}.
	The corresponding spreading theory relates this minimal
	speed to invasion from localized initial data in one space
	dimension; see \citep{Weinberger,BHN05}.
	Liang and Zhao \citep{LiangZhao07} established an abstract
	framework for spreading speeds and traveling waves of
	monotone semiflows, subsequently extended to evolution
	systems with spatial structure, including periodic habitats
	\citep{LiangZhao10}.}
	
{Further work has clarified the qualitative properties of
	fronts and their speeds. Berestycki, Hamel and Nadirashvili
	\citep{BHN05} derived spectral characterizations and studied
	the effects of diffusion, advection and geometry, while
	Liang, Lin and Matano \citep{LLM10} investigated the
	optimization of the minimal speed under a constraint on
	the average reaction rate. Hamel and Roques \citep{HR11}
	proved uniqueness and stability of pulsating fronts,
	including convergence from suitable noncompact front-like
	initial data. In particular, appropriate exponential tails
	can select speeds above the minimal one. Slower tails may
	instead produce accelerating propagation, as shown by
	Hamel and Roques \citep{HR10} in homogeneous media and
	Henderson \citep{Henderson16} in periodic media.}
	
{A natural question is which aspects of this propagation
	theory persist in more general heterogeneous environments.
	Understanding the relation between spreading speeds and
	the existence of fronts with uniformly bounded transition
	width is an important part of this question. Beyond
	periodic geometry, Berestycki, Hamel and Nadirashvili
	\citep{BHN10} studied spreading from compactly supported
	initial data in general unbounded domains, introducing
	different notions of spreading speed and analyzing their
	dependence on the domain and the initial condition.
	Almost periodic media provide a natural setting in which
	to investigate the effect of nonperiodic coefficients:
	they retain spatial recurrence but need not possess an
	exact period.}

For nonperiodic media, several notions of traveling fronts have been introduced. Matano's definition \citep{Matano03} describes recurrence of the profile through its dependence on the translated medium. For example, in the continuous case it has the form
\begin{equation}\label{eq:matanointro}
 u(t,x)=\mathcal P(a(\cdot+X(t)),x-X(t)),
 \qquad \mathcal P:\hull\times\R\longrightarrow[0,1],
\end{equation}
where $\hull$ is the uniform hull of $a$, the map $\mathcal P$ is continuous, and its limits at the two spatial infinities are {steady states $1$ and $0$ respectively}, uniformly in the hull variable. A lattice formulation using a passage-time cocycle is given in \citep{LWZZ24}. The generalized transition fronts of Berestycki and Hamel \citep{BH12} require only uniform convergence to the two {steady states} away from an interface; recurrence and spatial monotonicity are not assumed.It can be verified that almost periodic traveling wave solutions in Matano's sense are transition fronts in the sense of Berestycki–Hamel; Matano's definition particularly reflects the close connection between the propagation properties of the wave and the recurrence of the spatial medium. We mainly use the notion of generalized transition fronts, in the one-dimensional single-interface setting, together with a positive global mean speed. The precise definition is given in Section~\ref{sec:setting}. Time almost periodic equations have a different recurrence structure; see \citep{HS09,Shen10,Shen11}. Here the coefficients are independent of time, and all Lyapunov exponents refer to the spatial linearized equation.

Generalized principal eigenvalues of the linearized operator
\[
 \Lop_a:=\D+a
\]
provide a useful approach to spreading in almost periodic media
\citep{BN12,LZ20}. Nadin and Rossi \citep{NR17} and Liang, Wang,
Zhou and Zhou \citep{LWZZ24} develop existence theories for
transition fronts in the continuous and lattice settings,
respectively. In the present self-adjoint setting, the relevant
speeds are given by
\begin{equation}\label{eq:introinterval}
 w^*=\inf_{E>\lambda_1}\frac{E}{\mu(E)},\qquad
 \overline w=\frac{\lambda_1}{\mu_0},\qquad
 \mu_0=\lim_{E\downarrow\lambda_1}\mu(E),
\end{equation}
where $\lambda_1$ is the supremum of the spectrum of $\Lop_a$
and $\mu(E)$ is the spatial decay rate of the positive solution
of $\Lop_a\phi=E\phi$ that decays at $+\infty$.
They show that 
transition fronts exist with every global mean speed
$w\in[w^*,\overline w)$, whereas no transition front has
global mean speed $w<w^*$; see
\citep[Theorem~1.5]{NR17} and \citep[Theorem~1.1]{LWZZ24}.
A positive almost periodic eigenfunction at $\lambda_1$
implies $\overline w=+\infty$.

When $\overline w$ is finite, these results leave open the
existence of fronts at or above the upper endpoint.
In the present work, we prove that every transition front with positive global
mean speed $w$ satisfies
\begin{equation}\label{eq:introbound}
 w\leq\wmax:=\frac{\lambda_1}{L(\lambda_1)},
 \qquad L(\lambda_1)>0,
\end{equation}
where $L(E)$ is the spatial Lyapunov exponent.
As recalled in Section~\ref{sec:setting},
$\mu_0=L(\lambda_1)$, so $\wmax=\overline w$.
Together with the results of \citep{NR17,LWZZ24},
this completes the existence--nonexistence picture for
transition fronts with a global mean speed in the settings
covered by those results, apart from the upper endpoint
$w=\wmax$.
The finite upper speed bound when $L(\lambda_1)>0$
contrasts sharply with spatially homogeneous and periodic
media, where traveling and pulsating fronts, respectively,
exist at every speed at or above the minimal speed.

Quasiperiodic Schr\"odinger operators at large coupling provide a natural
source of examples. For a real-analytic sampling function
$v:\T^d\to\R$, a frequency vector $\alpha\in\R^d$ and a phase
$\theta\in\T^d$, write
\begin{equation}\label{eq:introSchrodinger}
 (H_{\kappa,\alpha,\theta}^{v}\varphi)(n)
 =\varphi(n+1)+\varphi(n-1)
   +\kappa v(\theta+n\alpha)\varphi(n),\qquad n\in\Z.
\end{equation}
For $a_\theta(n)=A+\kappa v(\theta+n\alpha)$, with
$A>\kappa\norm{v}_\infty$, we write
$\Lop_\theta:=\Lop_{a_\theta}$, so that
$\Lop_\theta=H_{\kappa,\alpha,\theta}^{v}+(A-2)I$.
For our purposes, the relevant input is positivity of the Lyapunov
exponent: for nonconstant analytic $v$, \citep{BG00,SS91} give a positive lower bound at every
energy when $\kappa$ is sufficiently large. The largeness of  $\kappa$ is essential, otherwise the Lyapunov exponents are zero in the spectrum \citep{Avila15,Eliasson92}. 
Continuous examples follow from the low-energy estimates of
Bjerkl\"ov \citep{Bjerklov06} and You-Zhou
\citep{YZ14}.

A positive localized eigenfunction at the spectral edge, when one
exists, suggests an obstruction to the slowly decaying leading edge
of a fast front. Such an eigenfunction is not an assumption of our
argument and is not implied by a positive Lyapunov exponent.
Indeed, in an ergodic discrete Schr\"odinger family, each fixed
energy is an eigenvalue for a set of phases of measure zero
\citep[Theorem~3.6]{Damanik}. In particular, the phase-independent
spectral edge is not an eigenvalue for almost every phase, even
in a regime of Anderson localization. This assertion concerns a
fixed energy; it does not assert absence of phase-dependent
eigenvalues.

There are also examples with no eigenvalues at any energy.
For an even sampling function, Jitomirskaya and Simon \citep{JS94}
prove absence of point spectrum for a dense $G_\delta$ set of phases.
 Moreover, sufficiently strong
rational approximation of the frequency can exclude eigenvalues
for every phase by sharp Gordon's argument \citep{AYZ17,Gordon76}.
Our conclusions apply to these singular continuous examples as well.

\subsection{Spreading solutions and their transition widths}\label{sec:introsharp}

The second question concerns the Cauchy problem with front-like initial data. 
In homogeneous media, slowly decaying exponential tails may select speeds above the minimal wave speed while preserving bounded transition width. More precise information is available from sharp level-set estimates. 
Under their respective hypotheses, Alfaro, Giletti and Xiao \citep{AGX25} treat critical tails $u_0(x)\asymp x^k e^{-x}$, $k\geq-2$, with $f'(0)=1$, and Zhang \citep{Zhang25} also treats $u_0(x)\asymp x^q e^{-px}$, $q\in\mathbb R$ and $0<p<\sqrt{f'(0)}$. 
In these cases every fixed level remains at bounded distance from a common position $m(t)$, given respectively by
\begin{equation}\label{eq:sharpcenters}
 \begin{aligned}
 m_k(t)&=
 \begin{cases}
 2t+\dfrac{k-1}{2}\log t+O_{t\to+\infty}(1),&k>-2,\\[2pt]
 2t-\dfrac32\log t+\log\log t+O_{t\to+\infty}(1),&k=-2,
 \end{cases}\\
 m_{p,q}(t)&=\left(p+\frac{f'(0)}p\right)t+\frac{q}{p}\log t+O_{t\to+\infty}(1).
 \end{aligned}
\end{equation}
Here $\asymp$ denotes two-sided comparison for large $x$. The common displacement cancels when two levels are compared, giving bounded width. Related estimates for steep initial data are obtained in \citep{Bramson,HNRR16}. 
{In the continuous almost periodic setting, Liang, Xu and
	Zhou \citep{LXZ26} establish spreading speeds and first-order
	level-set estimates for classes of exponentially and
	subexponentially decaying initial data. 
	These results yield that every fixed transition level is located
	at $ct+o(t)$, where $c$ is the spreading speed.
	These first-order estimates do not determine whether
	the transition width remains bounded.}

In this paper, we show that, under $L(\lambda_1)>0$, a rightmost level propagating faster than $\wmax$ cannot have an eventually bounded transition width. In particular, in the spatially continuous case the initial datum $u_0(x)=e^{-px_+}$ with $0<p<L(\lambda_1)$ gives such a solution, although its {rightward spreading speed} is finite. We prove unboundedness along a sequence of times, not divergence of the width or a precise growth rate.
{
In this regime, there can be no common centering function
$m(t)$ for which all transition levels remain within
$O(1)$ of $m(t)$, since such estimates would imply bounded
transition width.
Thus an analogue of the Bramson-type description with
a common center and bounded errors for all transition
levels is ruled out, regardless of the lower-order
correction included in $m(t)$.
}

This conclusion also differs from the localized-perturbation problem in \citep{NRRZ12}. A favorable compactly supported perturbation can admit slow-tail Cauchy solutions with arbitrarily large finite speeds and bounded width for $t\geq0$, even when entire fronts at those speeds do not exist. We will explain this point in Remark~\ref{rem:localizedwidth}.

\subsection{Outline of the proof}

The proof has two main ingredients. A uniform local exponential growth estimate gives an exponential upper estimate on the leading edge. At almost every element $g$ of the hull of $a$, two positive half-line solutions of the spectral-edge equation can be joined to obtain a decaying weight $\psi$ satisfying
\[
 (\lambda_1-\Lop_g)\psi=d\delta_0,\qquad d>0.
\] Here $\delta_0$ denotes the unit mass at the origin: the Dirac
measure in the continuous case and the sequence
$\delta_0(n)=\mathbf{1}_{\{0\}}(n)$ in the lattice case.

A hypothetical fast front then yields a positive ancient solution of the linearized equation with finite weighted mass. A time-monotonicity theorem contradicts the mass identity associated with $\psi$. For the Cauchy problem, we select first arrival times of the interface at prescribed spatial positions. This selection preserves a uniform bound on past interface positions and places the limiting coefficient in the full-measure set where the weight is available.

Section~\ref{sec:setting} states our assumptions and main results. Sections~\ref{sec:local}--\ref{sec:ancient} contain the estimates for the nonlinear and linearized equations. We prove the front-speed bound in Section~\ref{sec:cone}, and the width theorem in Section~\ref{sec:cauchy}. Applications and further questions are discussed in Sections~\ref{sec:examples} and \ref{sec:scope}.

\section{Assumptions and main results}\label{sec:setting}

\subsection{The hull and the linearized operator}

Let $\Sspace=\R$ or $\Z$, with Lebesgue or counting measure $m$, respectively. We assume throughout that $a$ is Bohr almost periodic and
\begin{equation}\label{eq:apos}
 0<\amin:=\inf_{x\in\Sspace}a(x)
 \leq\amax:=\sup_{x\in\Sspace}a(x)<+\infty.
\end{equation}
When $\Sspace=\R$, we also assume $a\in C_b^{0,\alpha}(\R)$ for some $\alpha\in(0,1)$. Set
\[
 \hull=\overline{\{a(\cdot+y):y\in\Sspace\}}_{\|\cdot\|_\infty},
 \qquad \tau_y g=g(\cdot+y).
\]
The translation action on $\hull$ is minimal and uniquely ergodic. Its invariant probability measure is denoted by $\mu_H$, to distinguish it from the spatial decay rate $\mu(E)$. For each $g\in\hull$, consider
\begin{equation}\label{eq:kpp}
 u_t=\D u+g(x)u(1-u),\qquad x\in\Sspace,
\end{equation}
and write $\Lop_g=\D+g$ for its linearization at zero. This is a self-adjoint operator on $L^2(\Sspace,m)$, with domain $H^2(\R)$ or $\ell^2(\Z)$. Denote its spectrum by $\sigma(\Lop_g)$. Minimality implies that
\[
 \lambda_1:=\sup\sigma(\Lop_g)
\]
is independent of $g$, and $\lambda_1\geq\amin>0$; see Lemma~\ref{lem:boxes}.

For $\Sspace=\Z$, let $\Phi_E(n,g)$ be the transfer product associated with
\[
 A_E(g)=\begin{pmatrix}E+2-g(0)&-1\\1&0\end{pmatrix},
 \qquad \binom{v(n+1)}{v(n)}=A_E(\tau_n g)\binom{v(n)}{v(n-1)}.
\]
For $\Sspace=\R$, let $\Phi_E(x,g)$ be the fundamental matrix, normalized at zero, of
\[
 \frac{d}{dx}\binom{v}{v'}=
 \begin{pmatrix}0&1\\E-g(x)&0\end{pmatrix}\binom{v}{v'}.
\]
The spatial Lyapunov exponent is
\begin{equation}\label{eq:gamma}
 L(E)=\lim_{r\to+\infty}\frac1r
 \int_{\hull}\log\norm{\Phi_E(r,g)}\,d\mu_H(g),
\end{equation}
where $r\in\N$ in the lattice case.

Recall the definition of $\mu(E)$ in \eqref{eq:introinterval}. 
For $E>\lambda_1$, the corresponding positive solution spans
the stable direction, and hence $\mu(E)=L(E)$; see \citep[Proposition~1.3 and Lemma~2.4]{NR17} in the continuous case and \citep[Proposition~3.4]{LWZZ24} in the lattice case.

Let $\nu$ denote the density-of-states measure on $\R$ associated
with the family $\{\Lop_g\}_{g\in\hull}$. More precisely, let
$\lambda_{R,j}(g)$ be the eigenvalues, counted with multiplicity,
of the Dirichlet restriction of $\Lop_g$ to
$I_R=[-R,R]\cap\Sspace$. Then
\[
 \int_\R F(\xi)\,d\nu(\xi)
 =\lim_{R\to+\infty}\frac{1}{m(I_R)}
   \sum_j F\bigl(\lambda_{R,j}(g)\bigr),
 \qquad F\in C_c(\R).
\]
In the present almost periodic setting, this limit is independent
of $g$. Thus $\nu$ describes the limiting distribution of
eigenvalues per unit length, or per lattice site. It is a locally
finite measure, with total mass one in the lattice case; no
absolute continuity is assumed.

Fix $E_0>\lambda_1$. The Thouless formula, in its relative
form in the continuous case, gives
\[
 L(E_0)-L(E)
 =\int_{(-\infty,\lambda_1]}
   \log\frac{E_0-\xi}{E-\xi}\,d\nu(\xi),
 \qquad \lambda_1<E<E_0;
\]
see \citep[Section~4]{CraigSimon83}.
As $E\downarrow\lambda_1$, the integrand is nonnegative and
increases monotonically to
$\log((E_0-\xi)/(\lambda_1-\xi))$.
By monotone convergence and the same formula at $E=\lambda_1$,
we obtain
\[
 \lim_{E\downarrow\lambda_1}L(E)=L(\lambda_1).
\]
Together with $\mu(E)=L(E)$ for $E>\lambda_1$, this yields
$\mu_0=L(\lambda_1)$ in both cases.
Our spectral assumption and the associated speed bound are
\begin{equation}\label{eq:wbar}
 L(\lambda_1)>0,\qquad \wmax=\overline w=\frac{\lambda_1}{L(\lambda_1)}.
\end{equation}
Constants denoted by $C$ may change from line to line; dependence on additional parameters is indicated when needed.

\subsection{Transition fronts}

\begin{definition}\label{def:front}
An entire solution $0<u<1$ of \eqref{eq:kpp} is called a right-moving transition front if there exists $X:\R\to\R$ such that
\begin{equation}\label{eq:uniformends}
 \lim_{R\to+\infty}\sup_{\substack{t\in\R\\x\leq X(t)-R}}|1-u(t,x)|=0,
 \qquad
 \lim_{R\to+\infty}\sup_{\substack{t\in\R\\x\geq X(t)+R}}u(t,x)=0.
\end{equation}
It has positive global mean speed $w$ if
\begin{equation}\label{eq:gms}
 \lim_{T\to+\infty}\sup_{t\in\R}
 \left|\frac{X(t+T)-X(t)}T-w\right|=0,
 \qquad w>0.
\end{equation}
All spatial variables in these formulas belong to $\Sspace$.
\end{definition}
A function $X$ satisfying \eqref{eq:uniformends} is called
an interface location function for $u$.

This definition follows the one-dimensional single-interface
formulation of generalized transition fronts in \citep{BH12}. Any two interface location functions for the same front
differ by a uniformly bounded function, so the global mean
speed is independent of their choice.
In the lattice case, an interface location function may
be chosen integer valued.

\begin{theorem}\label{thm:front}
Assume \eqref{eq:apos} and \eqref{eq:wbar}. For any $g\in\hull$, every transition front of \eqref{eq:kpp} with positive global mean speed $w$ satisfies
\[
 w\leq\wmax.
\]
Moreover, for each $q\in(0,\lambda_1/w)$ there exists $C_q>0$ such that
\begin{equation}\label{eq:fronttail}
 u(t,x)\leq\min\{1,C_qe^{-q(x-X(t))}\},
 \qquad (t,x)\in\R\times\Sspace.
\end{equation}
Consequently, for each fixed $t$,
\[
 \liminf_{x\to+\infty}\frac{-\log u(t,x)}x
 \geq\frac{\lambda_1}{w}\geq L(\lambda_1).
\]
\end{theorem}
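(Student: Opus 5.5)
We argue in two stages, following the outline in the introduction: first an exponential upper bound on the leading edge (which is \eqref{eq:fronttail}), then a contradiction argument for $w>\wmax$ based on a decaying weight at the spectral edge.

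\emph{Step 1: the tail estimate.} Fix $q\in(0,\lambda_1/w)$ and put $c=\lambda_1/q>w$. By \eqref{eq:gms}, for every $\varepsilon>0$ there is $C_\varepsilon$ with $X(t)-X(s)\le (w+\varepsilon)(t-s)+C_\varepsilon$ for all $s\le t$.
\begin{itemize}
\item The uniform local growth estimate of Section~\ref{sec:local}, applied to the linear equation $v_t=\Lop_g v$ (a supersolution of \eqref{eq:kpp}), gives $\|v(t)\|_{L^\infty(I)}\lesssim e^{(\lambda_1+\delta)t}$ on bounded windows $I$, uniformly over the hull.
\item For $x\ge X(t)+R$, go back to the time $s<t$ at which the interface was at distance roughly $x-X(t)$ behind $x$. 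Bound $u(t,x)$ by the linear evolution started from $u(s,\cdot)\le 1$ on a window near $X(s)$, plus the uniformly small contribution from farther right.
\item Use a Chernoff-type bound: a heat-kernel or random-walk factor $e^{-q(x-y)+\Lambda(q)(t-s)}$, where $\Lambda(q)$ is governed by the principal eigenvalue of the tilted operator $e^{qx}\Lop_g e^{-qx}$.
\end{itemize}
The cleaner route is a comparison on the region $x\ge X(t)+R$. Take as supersolution $C e^{-q(x-X(t))}$, or better $\phi_E(x)e^{E(t-s)}$ built from the positive solution of $\Lop_g\phi=E\phi$ decaying at $+\infty$ with $\mu(E)=L(E)$, and choose $E$ so that $E/\mu(E)$ exceeds $w$. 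Since $\mu(E)=L(E)\ge L(\lambda_1)$ and $L$ is increasing for $E>\lambda_1$, we can reach any rate $q<\lambda_1/w$ only by using all of $[\lambda_1,\infty)$.

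The actual plan is as follows. Consider the ratio $\sup_{t,x} u(t,x)e^{q(x-X(t))}$ over $x\ge X(t)+R$. Compare $u$ with $e^{\lambda_1(t-s)}$ times a uniform-in-hull $L^\infty$ bound for the linear semigroup, applied to initial data $\le e^{-q(y-X(s))}$. Along the trajectory, $\lambda_1(t-s)-q\bigl(X(t)-X(s)\bigr)\le(\lambda_1-qw-q\varepsilon)(t-s)$, and the exponent $\lambda_1-qw$ is positive. To close the argument, run the estimate backward in time, setting $s=t-T$ with $T\to\infty$; the gain factor $e^{-q(X(t)-X(s))}$ compensates the growth $e^{(\lambda_1+\delta)T}$ exactly when $q(w-\varepsilon)>\lambda_1+\delta$. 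This inequality runs the wrong way, so I would instead obtain the tail from the \emph{entire} history, which is what gives $q<\lambda_1/w$. The ancient solution lemmas of Section~\ref{sec:ancient} handle this. I expect pinning down this bookkeeping, with uniform constants over the hull, to be delicate.

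\emph{Step 2: the speed bound.} Suppose $w>\wmax$, so that $\lambda_1/w<L(\lambda_1)$. Pick $q\in(\lambda_1/w,\,L(\lambda_1))$. Take translates $u(t+t_n,x+X(t_n))$. By the transition front property and parabolic estimates these converge, along a subsequence, to a front for some limit $g_\infty$. We need $g_\infty$ to lie in the full-measure set of the hull where the weight $\psi$ with $(\lambda_1-\Lop_{g_\infty})\psi=d\delta_0$ exists. Achieve this by choosing the times $t_n$ as first arrival times of the interface at positions $y_n$ with $\tau_{y_n}g$ converging into that set, which is possible by minimality.

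Normalize near the leading edge, as in Section~\ref{sec:ancient}, to get a positive ancient solution $v$ of $v_t=\Lop_{g_\infty}v$ with finite weighted mass $M(t)=\mass{v(t)\psi}$. Finiteness holds because $\psi$ decays at $+\infty$ at rate $L(\lambda_1)$, which is larger than the admissible growth $e^{(\cdot)x}$ coming from the front's profile, and it decays at $-\infty$ as well. Then
\[
M'(t)=\lambda_1M(t)-d\,v(t,0).
\]
The time-monotonicity theorem for positive ancient solutions is the key input. It says $e^{-\lambda_1 t}v(t,0)$ is nondecreasing, or the analogous statement, which contradicts the identity above as $t\to-\infty$. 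I expect Step 2, and specifically establishing finiteness of the weighted mass through the limiting procedure, to be the main obstacle.

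\emph{Final assertion.} The last claim follows from \eqref{eq:fronttail}: for fixed $t$, $-\log u(t,x)\ge q(x-X(t))-\log C_q$, so the $\liminf$ is at least $q$ for every $q<\lambda_1/w$, hence at least $\lambda_1/w$. Finally $\lambda_1/w\ge L(\lambda_1)$ because $w\le\wmax$.
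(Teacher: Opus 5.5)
Your architecture for the speed bound matches the paper's: transfer to a regular coefficient in $\mathcal G$, produce a positive ancient solution of the linearized equation with finite $\psi$-weighted mass, and play the defect identity against time monotonicity. The estimate that makes both steps work is missing, however.

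\textbf{Step 1.} You treat the local growth estimate as an upper bound $\norm{v(t)}_{L^\infty(I)}\lesssim e^{(\lambda_1+\delta)t}$. Lemma~\ref{lem:growth} is a \emph{lower} bound, and that is exactly what produces \eqref{eq:fronttail}. Upper bounds through supersolutions or Chernoff factors can only propagate decay you already have, which is why your inequality came out the wrong way. The ancient-solution results of Section~\ref{sec:ancient} concern the linear equation and do not supply the tail. The missing argument runs forward in time. Choose $\zeta,\varepsilon,\delta>0$ with $b=\lambda_1-\varepsilon-\amax\delta>0$ and $b/(w+\zeta)>q$. By \eqref{eq:displacement}, the window $[y-R,y+R]$ stays in $\{u\le\delta\}$ for a time $T\approx(y-X(t))/(w+\zeta)$, so
\[
 1\geq u(t+T,y)\geq C^{-1}e^{bT}u(t,y),
\]
that is, $u(t,y)\leq Ce^{-b(y-X(t))/(w+\zeta)}$. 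A point far ahead of the interface must be tiny, because it grows at rate nearly $\lambda_1$ until the interface arrives, yet never exceeds $1$.

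\textbf{Step 2.} ``Normalize near the leading edge'' is not a construction, and your justification of finite weighted mass looks at the wrong side of $\psi$. The paper applies the same forward-growth argument to the limit $U$, using $X(t)\leq vt+C$ for $t\leq0$ with $\wmax<v<w$. This gives the space-time bound $U(t,x)\leq\min\{1,Ce^{bt}(1+e^{-bx/v})\}$ for $t\leq0$. It then lifts $U$ to a linear solution by Duhamel,
\[
 V(t)=U(t)+\int_{-\infty}^tP_{t-r}G(r)\,dr,\qquad G=gU^2,
\]
so that $V\geq U>0$ and $\norm{V(t)}_\psi<\infty$.

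Convergence of this integral needs $\norm{G(t)}_\psi\leq Ce^{\beta t}$ for some $\beta>\lambda_1$. On the already invaded region $\{x\lesssim vt\}$, $U^2$ is of order one. The $\psi$-mass of that region is about $e^{\gamma vt}$, where $\gamma<L(\lambda_1)$ is the decay rate of $\psi$ at $-\infty$. So the estimate forces $\gamma v>\lambda_1$, and this is where $v>\wmax$ enters. On $x>0$ one has $U\leq Ce^{bt}$ with $2b>\lambda_1$, so the decay rate of $\psi$ at $+\infty$ is irrelevant.

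Finally, the contradiction needs Lemma~\ref{lem:ancientmono} at every point. Only then is $t\mapsto\mass{\psi z(t)}$, with $z=e^{-\lambda_1t}V$, nondecreasing, while its derivative is $-d\,z(t,0)<0$. Monotonicity of $e^{-\lambda_1t}v(t,0)$ alone, as you state it, does not yield this.
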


\subsection{The Cauchy problem}

Let $u(t,x;u_0,g)$ denote the solution of \eqref{eq:kpp} with initial datum $u_0$. We assume $0\leq u_0\leq1$, with $u_0$ uniformly continuous when $\Sspace=\R$, and
\begin{equation}\label{eq:initialends}
 \lim_{x\to-\infty}u_0(x)=1,
 \qquad \lim_{x\to+\infty}u_0(x)=0.
\end{equation}
{Such initial data are called front-like.}
We usually write $u(t,x)$ for this solution. 
For $\theta\in(0,1)$, define
\begin{equation}\label{eq:levelends}
 x^-_\theta(t)=\inf\{x\in\Sspace:u(t,x)\leq\theta\},
 \qquad x^+_\theta(t)=\sup\{x\in\Sspace:u(t,x)\geq\theta\}.
\end{equation}
{
\begin{definition}\label{def:spreading}
	The lower and upper spreading speeds of the
	$\theta$-level are defined by
	\[
	\underline c_\theta(u_0,g)
	:=\liminf_{t\to+\infty}\frac{x^-_\theta(t)}{t},
	\qquad
	\overline c_\theta(u_0,g)
	:=\limsup_{t\to+\infty}\frac{x^+_\theta(t)}{t}.
	\]
	We say that $u$ has a finite rightward spreading speed
	$c>0$ if
	\begin{equation}\label{eq:spreading}
		\underline c_\theta(u_0,g)
		=\overline c_\theta(u_0,g)=c
		\qquad\text{for every }\theta\in(0,1).
	\end{equation}
\end{definition}
\begin{proposition}[{\citep[Theorem~1.3]{LXZ26}}]
	\label{thm:LXZspreading}
	Let $\Sspace=\R$ and $g\in\hull$.
	Assume that $u_0$ satisfies \eqref{eq:initialends} and
	\[
	u_0(x)\asymp e^{-px}
	\qquad\text{as }x\to+\infty
	\]
	for some $p>0$.
	Then the solution $u(t,x;u_0,g)$ has a finite
	rightward spreading speed $c>0$ independent of $g$. More precisely,
	for every $\varepsilon>0$,
	\[
	\lim_{t\to+\infty}
	\sup_{x\leq(c-\varepsilon)t}|1-u(t,x;u_0,g)|=0,
	\qquad
	\lim_{t\to+\infty}
	\sup_{x\geq(c+\varepsilon)t}u(t,x;u_0,g)=0.
	\]
	Consequently, for every $\theta\in(0,1)$,
	\[
	\lim_{t\to+\infty}\frac{x^-_\theta(t)}{t}
	=
	\lim_{t\to+\infty}\frac{x^+_\theta(t)}{t}
	=c,
	\]
	and hence
	$\underline c_\theta(u_0,g)
	=\overline c_\theta(u_0,g)=c$.
\end{proposition}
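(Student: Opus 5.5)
The plan is to identify $c$ explicitly through a family of generalized principal eigenvalues of exponentially conjugated operators, and then to prove the two halves of the spreading statement by comparison: a supersolution argument for the region $x\geq(c+\varepsilon)t$ and a subsolution argument for the region $x\leq(c-\varepsilon)t$. The final assertion about $x^\pm_\theta(t)/t$ is then immediate. By the two limits, for large $t$ we have $u>\theta$ on $(-\infty,(c-\varepsilon)t]$ and $u<\theta$ on $[(c+\varepsilon)t,\infty)$. Hence $(c-\varepsilon)t\leq x^-_\theta(t)\leq x^+_\theta(t)\leq(c+\varepsilon)t$.

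\textbf{Step 1: the speed.} For $q>0$, conjugate $\Lop_g$ by $e^{-qx}$:
\[
\Lop_{g,q}\phi=\phi''-2q\phi'+(q^2+g)\phi .
\]
Let $k(q)$ be its generalized principal eigenvalue, defined as the limit of the principal Dirichlet eigenvalues on $[y-R,y+R]$ as $R\to\infty$, uniformly in $y$. By minimality of the hull (as in Lemma~\ref{lem:boxes}), $k(q)$ is independent of $g\in\hull$. It is convex in $q$. Using the identification $\mu(E)=L(E)$ for $E>\lambda_1$ and $\mu_0=L(\lambda_1)$, I expect
\[
k(q)=\lambda_1 \quad\text{for } q\leq L(\lambda_1),\qquad k(q)=L^{-1}(q)\quad\text{for } q>L(\lambda_1).
\]
I then set
\[
c=\min_{0<q\leq p}\frac{k(q)}{q}.
\]
This equals $k(p)/p$ when $p$ lies below the minimizer of $k(q)/q$, and equals $w^*$ otherwise. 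In particular $c=\lambda_1/p>\wmax$ when $p<L(\lambda_1)$, which matches the later discussion of slow tails.

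\textbf{Step 2: upper bound.} Fix $q\leq p$ attaining (or nearly attaining) the minimum, and fix $\eta>0$. I need a positive function $\phi_\eta$ with $0<m\leq\phi_\eta\leq M$ and
\[
\Lop_{g,q}\phi_\eta\leq (k(q)+\eta)\phi_\eta .
\]
Then $\min\{1,\,Ce^{-q(x-(k(q)+\eta)t/q)}\phi_\eta(x)\}$ is a supersolution, by the KPP property $g u(1-u)\leq g u$. Because $u_0\leq Ce^{-px}\leq C'e^{-qx}$ on the right and $u_0\leq1$, it lies above $u_0$ for $C$ large, and comparison gives the upper half of the statement.

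The main obstacle is constructing $\phi_\eta$ with two-sided bounds when $q<L(\lambda_1)$, since then no positive eigenfunction need exist. I would try first to glue principal Dirichlet eigenfunctions on a covering of $\R$ by boxes of size $R$, using a partition-of-unity or minimum argument. Almost periodicity makes the box eigenvalues uniformly close to $k(q)$. Failing that, I would take an exponentially weighted resolvent $(k(q)+\eta-\Lop_{g,q})^{-1}$ applied to $1$, which is positive, bounded, and bounded below by Harnack and recurrence.

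\textbf{Step 3: lower bound.} For the lower half I would use compactly supported subsolutions. Take the principal Dirichlet eigenfunction of $\Lop_{g,q}$ on a large box, multiply it by $e^{-qx}$, and let it travel at speed $(k(q)-\eta)/q$. A small multiple is a subsolution of the nonlinear equation, since $u(1-u)\geq(1-\delta)u$ for small $u$. The initial mass is supplied by the tail $u_0\geq c_0e^{-px}$. One places such bumps at position $y$ with amplitude $e^{-py}$, so that they reach $x\approx(c-\varepsilon)t$ with amplitude of order one once
\[
y\approx\frac{\bigl(k(p)-p(c-\varepsilon)\bigr)t}{k(p)-\lambda_1+\dots}.
\]
This bookkeeping needs an optimization over $y$ and $q$; for $q<p$ the relevant speed is obtained via $q$ instead. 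Once $u$ is bounded below by a positive constant along a ray, the hair-trigger effect on large boxes (principal eigenvalue of $\Lop_g$ positive, from $\lambda_1\geq\amin>0$) together with uniform Liouville-type convergence to $1$ fills in $x\leq(c-\varepsilon)t$, uniformly in $g\in\hull$.
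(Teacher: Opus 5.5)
The paper gives no proof of this proposition; it is quoted from Liang--Xu--Zhou. Your proposal therefore has to stand on its own, and as written it has two genuine gaps.

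\textbf{First gap: the exponent in Step~1 is the wrong object.} On a bounded interval, conjugation by $e^{\pm qx}$ preserves Dirichlet conditions. Indeed, $\mathcal L_{g,q}\phi=\lambda\phi$ with $\phi(y\pm R)=0$ says exactly that $e^{-qx}\phi$ is a Dirichlet eigenfunction of $\Lop_g$ with eigenvalue $\lambda$. So the $k(q)$ you define equals $\lim_R\lambda_R=\lambda_1$ for every $q$. This contradicts the formula $k(q)=L^{-1}(q)>\lambda_1$ for $q>L(\lambda_1)$ that you then use. It would also give $c=\lambda_1/p$ for all $p$, which is false for large $p$: comparison with $u_t=u_{xx}+\amin u(1-u)$ yields $c\geq2\sqrt{\amin}$.

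The rate relevant to exponentially decaying data must be defined in one of two ways. Either use positive super- and subsolutions that stay bounded above and below after conjugation (as in Berestycki--Nadin), or work directly with the positive solutions $\phi^\pm_E$ of $\Lop_g\phi=E\phi$, $E>\lambda_1$, together with $\mu(E)=L(E)$.

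Your Step~2 constructions do not provide such supersolutions. Gluing Dirichlet eigenfunctions only sees the conjugation-invariant $\lambda_R$, so it cannot distinguish $q\leq L(\lambda_1)$ from $q>L(\lambda_1)$. Positivity and boundedness of $(k(q)+\eta-\mathcal L_{g,q})^{-1}1$ presuppose the very growth bound you are trying to prove. Step~2 can be repaired with $\psi=(\phi^+_E)^\theta(\phi^-_E)^{1-\theta}$, which is a supersolution by convexity of $r\mapsto r^2$ in the Riccati form, or with $\phi^+_E$ itself when $q=L(E)$. This $\psi$ behaves like $e^{-qx+o(|x|)}$ with $q=(2\theta-1)L(E)$. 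The $o(|x|)$ factor is harmless provided you compare with $e^{-qx}$ only for $q<p$.

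\textbf{Second gap: Step~3 does not give the lower bound.} Multiplying a Dirichlet eigenfunction of $\mathcal L_{g,q}$ by $e^{-qx}$ just returns a Dirichlet eigenfunction of $\Lop_g$, whose growth rate is close to $\lambda_1$ rather than $k(q)$. Moreover, a compactly supported bump cannot be carried at speed $(k(q)-\eta)/q$ while keeping its amplitude. Already for $g\equiv a_0$, a subsolution supported in a fixed window moving at speed $v$ grows at most at rate $a_0-v^2/4$, the bound on the Dirichlet eigenvalue of $\partial_{zz}+v\partial_z+a_0$. For $v=(a_0+q^2-\eta)/q>2\sqrt{a_0}$, that is, $q\neq\sqrt{a_0}$ and $\eta$ small, such bumps decay instead of reaching order one.

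In the slow-tail regime the speed comes from growth of the tail itself, so the optimization over $y$ and $q$ that you leave open is the whole difficulty.
\begin{itemize}
\item For $p\leq L(\lambda_1)$, the correct $k$ is flat and stationary bumps of amplitude $e^{-py}$ suffice. This is exactly how the paper proves the lower bound $\lambda_1/p$ in Corollary~\ref{cor:slow}.
\item For $p>L(\lambda_1)$ you need one of two things. One option is bumps moving at a suitable intermediate speed ($2p$ in the homogeneous case, not $k(p)/p$), in a frame where the coefficient $g(z+vt)$ becomes time dependent. The other is noncompact subsolutions $\max\{0,e^{Et}\phi^+_E-Me^{\sigma t}\phi^+_{E'}\}$ with $L(E')>L(E)$ and suitable $\sigma$, the analogue of $e^{-p(x-ct)}-Me^{-(p+\delta)(x-ct)}$, with the $e^{o(x)}$ fluctuations of $\phi^+_E$ controlled.
\item When $c=w^*$ you also need the lower spreading bound for steep data in almost periodic media.
\end{itemize}
None of this is supplied. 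Even with Step~2 repaired, your argument covers at most the regime $p\leq L(\lambda_1)$.
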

}
For $0<s<1/2$, the transition width is
\begin{equation}\label{eq:width}
 W_s(t)=\max\{0,\rs(t)-\ls(t)\}.
\end{equation}
For every fixed finite time, the solution still converges to $1$ as $x\to-\infty$ and to $0$ as $x\to+\infty$, so these quantities are finite. On $\R$, $x^-_\theta$ and $x^+_\theta$ are the extreme points of $E_\theta(t)=\{x:u(t,x)=\theta\}$, as in \citep{LXZ26}. Definition~\eqref{eq:levelends} also applies on $\Z$, where an equality level may be empty. It includes every component of the transition region, without a spatial monotonicity assumption.

\begin{theorem}\label{thm:width}
Assume \eqref{eq:apos} and \eqref{eq:wbar}, and let $g\in\hull$. If $u_0$ satisfies \eqref{eq:initialends} and, for some $s\in(0,1/2)$,
\begin{equation}\label{eq:fastcauchy}
{\overline c_\theta(u_0,g)>\wmax,}
\end{equation}
then
\begin{equation}\label{eq:unboundedwidth}
 \limsup_{t\to+\infty}W_s(t)=+\infty.
\end{equation}
\end{theorem}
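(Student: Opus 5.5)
We argue by contradiction. Suppose $W_s(t)\le M$ for all $t\ge t_0$, while $\overline c_\theta(u_0,g)>\wmax$ for the level $\theta=s$ in \eqref{eq:fastcauchy}. Then the rightmost $s$-level $\rs(t)$ satisfies $\limsup_{t\to\infty}\rs(t)/t=c'>\wmax$. The plan is to extract, along suitable times, a limiting entire solution that behaves like a fast front. We then contradict the mechanism behind Theorem~\ref{thm:front}: the weighted mass identity attached to a decaying weight $\psi$ solving $(\lambda_1-\Lop_{g_\infty})\psi=d\delta_0$, combined with the time-monotonicity theorem for positive ancient solutions of the linearized equation. We would not try to build a genuine transition front with a global mean speed. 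Instead we work directly with the ingredients of the proof of Theorem~\ref{thm:front}, which need only a one-sided control of past interface positions together with the bounded-width property.

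\textbf{Step 1: first arrival times.} Fix $c''\in(\wmax,c')$. For large $N$, let $t_N$ be the first time at which $\rs(t)\ge c''t_N$-type targets are reached. Concretely, pick spatial targets $y_N\to\infty$ and set $t_N=\inf\{t\ge t_0:\rs(t)\ge y_N\}$. Since $\limsup \rs(t)/t>c''$, we can choose the $y_N$ so that $y_N\ge c'' t_N$. By construction, $\rs(t)<y_N$ for $t<t_N$. Bounded width gives $\ls(t)\ge\rs(t)-M$, so the whole transition zone at earlier times stays to the left of $y_N$. In the lattice case we take $y_N\in\Z$. Shift by setting $u_N(t,x)=u(t_N+t,x+y_N)$, which solves \eqref{eq:kpp} with coefficient $\tau_{y_N}g$. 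Using parabolic estimates (Hölder continuity of $a$ on $\R$) and compactness of $\hull$, extract a subsequence along which $u_N\to u_\infty$ locally uniformly on $\R\times\Sspace$ and $\tau_{y_N}g\to g_\infty$. The freedom in choosing $y_N$, namely perturbing it by bounded amounts within the same arrival window together with unique ergodicity, should let us place $g_\infty$ in the full-measure set where the weight $\psi$ exists. This is the point flagged in the outline, and we expect it to be the main obstacle. Our first attempt: choose $y_N$ among the integers in a long interval of positions, using the fact that a $\mu_H$-full set is hit with positive lower density along orbits. The first-arrival property must then be retained up to an additive $O(1)$ error, which bounded width permits.

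\textbf{Step 2: exponential tail and linear limit.} For $t\le0$, the limit $u_\infty$ has interface $X_\infty(t)\le 0+O(1)$ with $X_\infty(0)\approx 0$. Since $t_N\le y_N/c''$, the averaged speed over $[0,t_N]$ exceeds $c''$. Apply the uniform local exponential growth estimate from Sections~\ref{sec:local}--\ref{sec:ancient}, exactly as in the proof of \eqref{eq:fronttail}. This yields a leading-edge bound $u_N(t,x)\le Ce^{-q(x-\rs(t_N+t)+y_N)}$ with a rate $q<\lambda_1/c''<L(\lambda_1)$, uniformly in $N$. The input here is that the solution had to travel from near $0$ to $y_N$ in time at most $y_N/c''$, and the bounded width prevents it from lagging in the bulk. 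We then renormalize at a far-right point: set $v_N=u_N(t,x+z)/u_N(0,z)$ with $z\to\infty$ along the leading edge. In the limit this produces a positive ancient solution $v$ of $v_t=\Lop_{g_\infty}v$. Its weighted mass $\int v\psi\,dm$ is finite because $q$ can be taken larger than the decay-rate deficit, while $\psi$ decays at rate $L(\lambda_1)$ on both sides.

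\textbf{Step 3: contradiction.} The mass identity gives
\[
\frac{d}{dt}\int v\psi\,dm=\lambda_1\int v\psi\,dm-d\,v(t,0).
\]
The time-monotonicity theorem for ancient solutions, applied to $v$ and its behaviour as $t\to-\infty$ inherited from speed $c''>\wmax$, is incompatible with this identity exactly as in the proof of Theorem~\ref{thm:front}. This contradiction shows that $W_s$ cannot stay bounded, which proves \eqref{eq:unboundedwidth}.
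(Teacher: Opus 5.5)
\textbf{There is a genuine gap.} Your overall architecture matches the paper's: a contradiction argument, translated limits at first-arrival times, a regular limit coefficient, a positive ancient linear solution with finite $\psi$-mass, and Corollary~\ref{cor:ancient}. But Step~1 does not deliver the property the contradiction needs.

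First arrival at $y_N$ together with $y_N\geq c''t_N$ gives only the flat bound $\rs(t)<y_N$ for $t<t_N$. In the limit this is just $X_\infty(t)\leq O(1)$ for $t\leq0$. Proposition~\ref{prop:cone} requires instead a \emph{uniform past cone}: $\rs(t)\leq y_N+v(t-t_N)+O(1)$ for all $T_1\leq t\leq t_N$, with some $v>\wmax$ and constants independent of $N$, as in \eqref{eq:recordcone}. A large average speed over $[t_0,t_N]$ does not imply this. The interface may run ahead early and then advance at speed below $v$ over a long window just before $t_N$.

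Without the cone, Lemma~\ref{lem:growth} yields no bound $U(t,x)\leq Ce^{bt}(1+e^{-bx/v})$ as $t\to-\infty$. You therefore cannot get $\norm{g_*U^2}_\psi=O(e^{\beta t})$ with $\lambda_1<\beta<\gamma v$, and the finite weighted mass of the linear solution is lost. This past decay, with $\beta/v<L(\lambda_1)$, is exactly where $v>\wmax$ enters.

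Your Step~2 leading-edge bound, with rate $q<\lambda_1/c''$ uniformly in $N$, is also unjustified. The argument of Proposition~\ref{prop:tail} uses an upper bound on \emph{future} displacement coming from the global mean speed. A Cauchy solution does not provide this, and its speed exceeds $c''$ at some times since $c''<c'$.

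The missing idea is the record selection of Lemma~\ref{lem:records}. Let $t_n$ be the first arrival time at $nh$ and set $D_n=nh/v-t_n$. Every strict record index $n$ of $D_n$ satisfies $t_n-t_m\leq(n-m)h/v$ for all $n_0\leq m\leq n$, which is precisely the cone. Since $D_n-D_{n-1}\leq h/v$ while $\limsup D_n/n>0$, the records have positive upper density.

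The regular-coefficient step also fails as proposed. What must lie in $\mathcal G$ is the limit $g_\infty$, not points of the orbit, and $\mathcal G$ is not closed. Moreover, admitting a weight as in \eqref{eq:defect} is a translation-invariant property: translate $v^\pm$ and reglue at $0$. So if $g$ is irregular its orbit never meets $\mathcal G$, and perturbing $y_N$ by bounded amounts only translates the limit.

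The working mechanism requires the selected index set itself to have positive upper density. With $\tau_h$ uniquely ergodic (Lemma~\ref{lem:mesh}), weak limits of the record-indexed empirical measures are then nonzero and dominated by $\mu_H$. Hence they assign zero mass to $\hull\setminus\mathcal G$, and any $g_*\in\mathcal G\cap\supp\nu_{\mathrm{rec}}$ is a limit of arbitrarily late record translates. This is why the cone-preserving indices must have positive density, and the records supply both properties at once.

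Finally, the renormalization $u_N(t,x+z)/u_N(0,z)$ with $z\to\infty$ reintroduces the problem. Its coefficient is a limit of $\tau_{y_N+z}g$ rather than $g_\infty$, and finiteness of its $\psi$-mass would need matching upper and lower exponential bounds at a single rate. Once the cone holds, none of this is needed. The function $V(t)=U(t)+\int_{-\infty}^tP_{t-r}(g_*U^2)(r)\,dr$ is a positive solution of $V_t=\Lop_{g_*}V$ with finite weighted mass, and $e^{-\lambda_1t}V$ contradicts Corollary~\ref{cor:ancient}.
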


\begin{corollary}\label{cor:slow}
Assume \eqref{eq:apos} and \eqref{eq:wbar}. Let $g\in\hull$ and let $u_0$ satisfy \eqref{eq:initialends}. Suppose that, for some $b>0$ and $0<p<L(\lambda_1)$,
\[
 u_0(x)\geq b e^{-p x_+},\qquad x_+=\max\{x,0\}.
\]
Then \eqref{eq:unboundedwidth} holds for every $s\in(0,1/2)$. In particular, this conclusion holds for $u_0(x)=e^{-px_+}$.
\end{corollary}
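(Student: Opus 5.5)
The plan is to reduce Corollary~\ref{cor:slow} to Theorem~\ref{thm:width}. We show that a lower tail $be^{-px_+}$ with $p<L(\lambda_1)$ forces the rightmost level of every $\theta\in(0,1)$ to move with asymptotic speed at least $\lambda_1/p>\wmax$. Since $p<L(\lambda_1)$, we have $\lambda_1/p>\lambda_1/L(\lambda_1)=\wmax$. Hence, once we know $\overline c_\theta(u_0,g)\geq\lambda_1/p$ for every $\theta$, Theorem~\ref{thm:width} applies for every $s\in(0,1/2)$ and gives \eqref{eq:unboundedwidth}.

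The lower bound on the speed will come from comparison with a subsolution built from spectral data near $\lambda_1$. By Lemma~\ref{lem:boxes} (the box characterization of $\lambda_1$), for each $\varepsilon>0$ there is a large box, and by minimality a relatively dense family of translated boxes, on which the Dirichlet principal eigenvalue of $\Lop_g$ exceeds $\lambda_1-\varepsilon$. Let $\varphi_R\geq0$ be the corresponding compactly supported principal eigenfunction placed at a point $y$. For small $\delta$, the function $\delta e^{(\lambda_1-2\varepsilon)t}\varphi_R(x-y)$ is a subsolution of \eqref{eq:kpp} as long as it stays below $\varepsilon/\amax$. This uses the KPP structure: $g u(1-u)\geq(g-\varepsilon)u$ for small $u$. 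We then pick $y=x_0$ large and compare with the initial datum. Since $u_0\geq be^{-p y}$ on the box, we may take $\delta\asymp be^{-p x_0}$. The subsolution reaches height $\eta:=\varepsilon/\amax$ near $x_0$ by time
\[
t\approx\frac{p x_0+O(1)}{\lambda_1-2\varepsilon}.
\]
Afterwards, a standard spreading (hair-trigger) argument with bounded-box subsolutions raises $u$ above any fixed $\theta<1$ in a bounded neighborhood of $x_0$ within a further time $T_\theta$ independent of $x_0$. Indeed, $\inf g\geq\amin>0$, so the usual compactly supported KPP subsolution lifting $\eta$ to $\theta$ on a unit box works uniformly over the hull. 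Consequently $x^+_\theta(t)\geq x_0$ at $t=px_0/(\lambda_1-2\varepsilon)+C_{\varepsilon,\theta}$. Letting $x_0\to\infty$ along the dense family of good boxes gives $\overline c_\theta\geq(\lambda_1-2\varepsilon)/p$. Since $\varepsilon$ is arbitrary, $\overline c_\theta\geq\lambda_1/p>\wmax$.

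The main obstacle is making the growth step quantitative and uniform. Two things must be checked. First, the nonlinear correction is controlled while the linear subsolution grows from $\delta$ to $\eta$. Second, the constants are independent of $x_0$, which requires the good boxes to exist near every large $x_0$ for the fixed $g$. Both follow from almost periodicity: the box eigenvalue is a continuous function of the translate in $\hull$, so Lemma~\ref{lem:boxes} with compactness of $\hull$ gives boxes of a fixed size $R_\varepsilon$ with eigenvalue $>\lambda_1-\varepsilon$ around every point. In the lattice case the same construction works with the discrete Dirichlet eigenvector. Once \eqref{eq:fastcauchy} is verified for each $\theta$, Theorem~\ref{thm:width} yields the conclusion for every $s$. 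The special case $u_0=e^{-px_+}$ satisfies \eqref{eq:initialends} and the hypothesis with $b=1$.
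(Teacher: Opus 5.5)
Your proposal is correct and follows essentially the paper's route. You reduce to Theorem~\ref{thm:width} by showing that the $\theta$-levels advance at speed at least $\lambda_1/p>\wmax$, using the uniform Dirichlet box eigenfunctions of Lemma~\ref{lem:boxes} as subsolutions seeded by the tail $be^{-px_+}$ and then a uniform hair-trigger lift. The paper instead uses the logistic subsolution $q(t)\varphi_{y,R}$ and proves the stronger estimate \eqref{eq:lowerseed} on $\underline c_\theta$, but your $\limsup$ bound along the times $t(x_0)$ is all Theorem~\ref{thm:width} needs.

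One slip: the final lift to a level $\theta$ near $1$ cannot use a ``unit box''. It needs a box of $\theta$-dependent size whose principal eigenvalue for $\D+\amin$ exceeds $\theta\amin$. Positivity reaches that box from your $R$-box after one unit of heat comparison, as in the proof of \eqref{eq:pointamplify}.
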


\begin{remark}\label{rem:sublinearwidth}
{If $u$ has a finite spreading speed in the sense of Definition~\ref{def:spreading},
then, for every $s \in (0,1/2)$ and $\varepsilon > 0$,
\[
0 \leq W_s(t) \leq 2\varepsilon t
\]
for all sufficiently large $t$.
Hence $W_s(t)=o(t)$.} 

{In the continuous case, the solution with
$u_0(x)=e^{-px_+}$, $p>0$, has a finite spreading speed by Proposition~\ref{thm:LXZspreading}.
Consequently, when $0<p<L(\lambda_1)$, Corollary~\ref{cor:slow}
gives
\[
\limsup_{t\to+\infty}W_s(t)=+\infty,
\qquad
\frac{W_s(t)}{t} \to 0
\qquad\text{for every }s\in(0,1/2).
\]
Thus the width is unbounded along a sequence of times,
while remaining sublinear in time.}
\end{remark}

\section{Uniform estimates for the leading edge}\label{sec:local}

We first establish a local exponential growth estimate, uniform with respect to the coefficient in $\hull$ and the position of the interval.

\begin{lemma}\label{lem:boxes}
Let $\lambda_R(g)$ be the principal Dirichlet eigenvalue of $\Lop_g$ on $[-R,R]\cap\Sspace$. In the lattice case, the Dirichlet restriction to
$I_R=[-R,R]\cap\Z$ is defined by extending functions
on $I_R$ by zero to $\Z\setminus I_R$ before applying
$\Lop_g$. Then $\lambda_R(g)\uparrow\lambda_1$ uniformly for $g\in\hull$. The convergence is also uniform with respect to the center of the interval.
\end{lemma}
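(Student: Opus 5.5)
The argument rests on the variational characterization of the principal Dirichlet eigenvalue together with minimality of the hull. For a fixed $g$ and an interval $I=[y-R,y+R]\cap\Sspace$, write $\lambda_{R}(g,y)$ for the principal Dirichlet eigenvalue of $\Lop_g$ on $I$. By translation, $\lambda_R(g,y)=\lambda_R(\tau_y g)$, so uniformity in the center reduces to uniformity over $g\in\hull$, since $\hull$ is translation invariant. The plan has four steps.

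\emph{Monotonicity and the upper bound.} By the Rayleigh quotient
\[
\lambda_R(g)=\sup\Bigl\{\ip{\Lop_g\phi}{\phi}:\ \supp\phi\subset I_R,\ \norm{\phi}_{L^2}=1\Bigr\},
\]
where $\ip{\Lop_g\phi}{\phi}=\int(-|\phi'|^2+g\phi^2)$ in the continuous case and the analogous discrete form on $\Z$. Since the admissible class grows with $R$, $\lambda_R(g)$ is nondecreasing in $R$. Each such $\phi$ is admissible for the whole-line quadratic form, so $\lambda_R(g)\le\sup\sigma(\Lop_g)$.

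\emph{Pointwise convergence.} By self-adjointness, $\sup\sigma(\Lop_g)$ equals the supremum of the quadratic form over compactly supported test functions, which are dense in the form domain. Hence $\lambda_R(g)\uparrow\sup\sigma(\Lop_g)$ for each fixed $g$.

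\emph{Continuity in $g$ and independence of $\lambda_1$.} Perturbing the potential changes the form by at most $\norm{g-h}_\infty$, so
\[
|\lambda_R(g)-\lambda_R(h)|\le\norm{g-h}_\infty
\qquad\text{and}\qquad
|\sup\sigma(\Lop_g)-\sup\sigma(\Lop_h)|\le\norm{g-h}_\infty .
\]
The map $g\mapsto\sup\sigma(\Lop_g)$ is therefore continuous on $\hull$ and invariant under translations, because $\Lop_{\tau_yg}$ is unitarily equivalent to $\Lop_g$. By minimality every orbit is dense, so this map is constant, equal to $\lambda_1$. Testing the form with a bump on a long interval, normalized in $L^2$ with gradient energy $O(R^{-2})$ (or $O(1/R)$ in the lattice case), gives $\lambda_1\ge\amin$.

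\emph{Uniform convergence.} Each $\lambda_R$ is continuous on the compact set $\hull$ and increases to the constant $\lambda_1$, so Dini's theorem yields uniform convergence on $\hull$. Combined with the translation remark, this gives uniformity in the center of the interval.

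The delicate point is the pointwise convergence step: identifying $\sup\sigma(\Lop_g)$ with the supremum of the form over compactly supported functions. This requires only the standard fact that $C_c^\infty$ (respectively finitely supported sequences) is a form core for $\Lop_g$, which holds because $g$ is bounded. On $\Z$ the Dirichlet restriction is the compression of $\Lop_g$ to $\ell^2(I_R)$, and its quadratic form is exactly the restriction of the full form, so the same argument applies.
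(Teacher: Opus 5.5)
Your proof is correct and follows essentially the same route as the paper: the variational characterization gives monotonicity and pointwise convergence, uniform closeness of potentials together with translation invariance and minimality gives independence of $\lambda_1$ from $g$, continuity of $\lambda_R$ on the compact hull plus Dini's theorem gives uniform convergence, translation covariance handles the centers, and plateau test functions give $\lambda_1\geq\amin$. The only cosmetic difference is that you obtain constancy of $g\mapsto\sup\sigma(\Lop_g)$ from the Lipschitz bound in $\norm{g-h}_\infty$, whereas the paper proves the one-sided inequality $\sup\sigma(\Lop_{\widetilde g})\leq\sup\sigma(\Lop_g)$ for $\widetilde g$ in the orbit closure of $g$ and uses minimality for the reverse inequality.
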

\begin{proof}
The spectral supremum is the supremum of the Rayleigh quotients of compactly supported test functions, with form domain $H^1(\R)$ in the continuous case. If $\tau_{y_j}g\to\widetilde g$ uniformly, translation invariance of the Rayleigh quotient gives
\[
 \sup\sigma(\Lop_{\widetilde g})\leq\sup\sigma(\Lop_g).
\]
Minimality gives the reverse inequality. Thus $\lambda_1$ is independent of $g$. The variational characterization also shows that $\lambda_R(g)\uparrow\lambda_1$. For fixed $R$, $\lambda_R$ is continuous on the compact hull, since the potentials converge uniformly. Dini's theorem gives uniform convergence, and translation covariance gives the assertion for arbitrary centers. Finally, plateau test functions with increasing support yield $\lambda_1\geq\amin$.
\end{proof}

The next lemma gives a local exponential growth estimate for solutions that remain small on a fixed spatial interval throughout the time interval under consideration.

\begin{lemma}\label{lem:growth}
For every $\varepsilon>0$, there exist $R,C\geq1$ with the following property. If a solution $0<u<1$ of \eqref{eq:kpp} satisfies $u\leq\delta$ on $[t,t+T]\times([y-R,y+R]\cap\Sspace)$, where $T\geq2$, then
\begin{equation}\label{eq:growth}
 u(t+T,y)\geq C^{-1}e^{(\lambda_1-\varepsilon-\amax\delta)T}u(t,y).
\end{equation}
The constants are independent of $g$ and $y$. For Cauchy solutions we take $t\geq1$.
\end{lemma}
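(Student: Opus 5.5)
Fix $\varepsilon>0$ and use Lemma~\ref{lem:boxes} to choose $R\geq1$ with $\lambda_R(g)\geq\lambda_1-\varepsilon/2$ for every $g\in\hull$ and every center. Let $\varphi=\varphi_{g,y}$ be the positive principal Dirichlet eigenfunction of $\Lop_g$ on $I=[y-R,y+R]\cap\Sspace$, normalized by $\max\varphi=1$. On $[t,t+T]\times I$ we have $g u(1-u)\geq g u-\amax\delta u$, so $u$ is a supersolution of the linear problem
\[
 v_\tau=\D v+(g-\amax\delta)v \ \text{in } I,\qquad v=0 \text{ on the complement (or boundary)},
\]
since $u>0$ outside/at the boundary. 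The function $v(\tau,x)=\eta\,e^{(\lambda_R(g)-\amax\delta)(\tau-t-1)}\varphi(x)$ solves this problem, and the comparison principle (valid for both $\D=\partial_{xx}$ and the discrete Laplacian, as the lattice Dirichlet problem is a cooperative ODE system) gives $u\geq v$ on $[t+1,t+T]\times I$ provided $u(t+1,\cdot)\geq\eta\varphi$ on $I$.

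The key steps are therefore the two Harnack-type comparisons. First, by a parabolic Harnack inequality on a fixed cylinder (interior Harnack for the continuous case with bounded coefficient $g(1-u)\in[0,\amax]$; for the lattice, the explicit lower bound from the heat semigroup $e^{\tau\D}$ over a finite distance $\leq 2R$ in time $1$), there is $C_1=C_1(R,\amax)$ with $u(t+1,x)\geq C_1^{-1}u(t,y)$ for $x\in[y-R/2,y+R/2]$. To cover all of $I$ I would instead use the eigenfunction on a slightly smaller interval $I'=[y-R',y+R']$ with $R'<R$ chosen so that $\lambda_{R'}\geq\lambda_1-\varepsilon$, giving $\eta=C_1^{-1}u(t,y)$; since $\varphi\leq 1$, the initial comparison holds. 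Second, we need $\varphi(y)$ bounded below uniformly: by continuity of the normalized principal eigenfunction with respect to $g$ in the compact hull (uniform convergence of potentials, simple principal eigenvalue) and translation covariance, $\inf_{g,y}\varphi_{g,y}(y)=:c_0>0$. Combining, $u(t+T,y)\geq C_1^{-1}c_0\,e^{(\lambda_1-\varepsilon-\amax\delta)(T-1)}u(t,y)$, and absorbing $e^{\lambda_1}$ into $C$ yields \eqref{eq:growth} with $C$ independent of $g,y$; $T\geq2$ guarantees the unit-time Harnack step fits.

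The main obstacle is the uniform Harnack step from time $t$ to $t+1$: it requires only $0<u<1$ and bounded coefficients, but for Cauchy solutions near $t=0$ the initial datum may be merely uniformly continuous, which is why one restricts to $t\geq1$ (so that parabolic regularity/Harnack applies on $[t-1,t+1]$ if needed, or simply the forward comparison with the heat kernel from time $t$). I would prove it by comparing $u$ from below with the solution of $v_\tau=\D v$ (dropping the nonnegative reaction term $gu(1-u)\geq0$), which gives directly $u(t+1,x)\geq (e^{\D}u(t,\cdot))(x)\geq c(R)u(t,y)$ on the lattice, and on $\R$ via interior Harnack for $u$ on $[t-1,t+1]$ — this avoids any dependence on $g$.
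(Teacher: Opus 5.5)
Your proposal is correct and is essentially the paper's argument: a unit-time Harnack step, followed by comparison with the max-normalized principal Dirichlet eigenfunction on a box where $\lambda_R\geq\lambda_1-\varepsilon$, with its value at the center bounded below uniformly by compactness of the hull. The differences are minor. On $\Z$ the paper skips the unit-time step by bounding the diagonal of the Dirichlet heat kernel below by its principal spectral term $e^{\lambda_R T}\varphi_R(y)^2$, with $\varphi_R$ normalized in $\ell^2$. Your passage to a smaller box $I'$ is unnecessary, because the Harnack bound needs no smallness of $u$ and already covers all of $[y-R,y+R]$; if you keep $I'$, fix $R'$ first and then take $R\geq 2R'$.
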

\begin{proof}
Choose $R$ by Lemma~\ref{lem:boxes}. In the cylinder under consideration,
$u_t\geq(\Lop_g-\amax\delta)u$.
On $\Z$, the diagonal Dirichlet heat kernel is bounded below by
$e^{\lambda_RT}\varphi_R(y)^2$, where $\varphi_R$ is the positive $\ell^2$-normalized principal eigenfunction. By compactness of the hull, $\varphi_R(y)$ has a uniform positive lower bound.

On $\R$, apply the parabolic Harnack inequality to $u_t=u_{xx}+b(t,x)u$, with $0\leq b\leq\amax$. It gives
\[
 u(t+1,z)\geq C_R^{-1}u(t,y),\qquad |z-y|\leq R.
\]
For the remaining time, compare with the principal Dirichlet eigenfunction normalized by its maximum. Its value at the center is uniformly positive. Absorbing the first unit of time into the constant proves \eqref{eq:growth}. The continuous Harnack and interior estimates used here and below are standard; see \citep{Lieberman}.
\end{proof}

\begin{lemma}\label{lem:displacement}
Let $u$ be a transition front and let $X$ be any associated
interface location function as in Definition~\ref{def:front}. For every $T>0$,
\[
 \sup_{t\in\R}\sup_{0\leq r\leq T}|X(t+r)-X(t)|<+\infty.
\]
If its global mean speed is $w$, then, for each $\varepsilon>0$, there exists $C_\varepsilon$ such that
\begin{equation}\label{eq:displacement}
 |X(t+r)-X(t)-wr|\leq\varepsilon r+C_\varepsilon,
 \qquad t\in\R,\quad r\geq0.
\end{equation}
\end{lemma}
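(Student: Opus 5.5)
The first assertion is a bounded-displacement property of $X$ over bounded time windows, and the second follows from it together with the global mean speed \eqref{eq:gms}.

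\textbf{Step 1: bounded displacement over bounded times.} By \eqref{eq:uniformends}, fix $R_0$ such that $u(t,x)\geq 3/4$ for $x\leq X(t)-R_0$ and $u(t,x)\leq 1/4$ for $x\geq X(t)+R_0$, for all $t\in\R$. We argue by contradiction for the upper bound on $X(t+r)-X(t)$. Suppose there are $t_n$ and $r_n\in[0,T]$ with $X(t_n+r_n)-X(t_n)\to+\infty$.

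At time $t_n$ we have $u(t_n,x)\leq 1/4$ for all $x\geq X(t_n)+R_0$. Compare $u$ on $[t_n,t_n+T]$ with the supersolution of the linear equation $v_t=\D v+\amax v$ whose initial datum is $1$ for $x<X(t_n)+R_0$ and $1/4$ otherwise. Heat-kernel Gaussian tails on $\R$, or discrete-heat-kernel tails on $\Z$, show that
\[
v(t_n+r,x)\leq \tfrac14e^{\amax T}+C_Te^{-(x-X(t_n)-R_0)}.
\]
This bound alone is not below $1/2$, so instead use $\min(1,\cdot)$ and the KPP structure: the solution $\bar u$ of \eqref{eq:kpp} with $g$ replaced by $\amax$, starting from the datum above, is a supersolution. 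By continuous dependence on initial data over a finite time, taking $R_0$ large enough that the tail level is $\eta$ small rather than $1/4$, we get $\bar u\leq 1/4$ far to the right, uniformly in $n$. But $u(t_n+r_n,X(t_n+r_n)-R_0)\geq 3/4$, with $X(t_n+r_n)-R_0$ far to the right of $X(t_n)$, which is a contradiction.

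\textbf{Step 2: the lower bound.} Suppose $X(t_n)-X(t_n+r_n)\to+\infty$. At time $t_n$, $u\geq 1-\eta$ on $(-\infty,X(t_n)-R_0]$. Compare with the subsolution given by the solution of $v_t=\D v$ (that is, the equation with $g$ replaced by $0$, valid since $g u(1-u)\geq0$) with datum $(1-\eta)\mathbf 1_{x\leq X(t_n)-R_0}$. After time $r_n\leq T$, this stays at least $1/2$ on points at distance $\gg\sqrt T$ to the left of $X(t_n)-R_0$. This contradicts $u(t_n+r_n,x)\leq\eta$ for $x\geq X(t_n+r_n)+R_0$, once the gap is large. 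These comparisons are uniform in $n$, which gives the first claim.

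\textbf{Step 3: \eqref{eq:displacement}.} Given $\varepsilon$, choose $T_0$ from \eqref{eq:gms} with $|X(t+T)-X(t)-wT|\leq\varepsilon T$ for all $t$ and all $T\geq T_0$. For $r\geq T_0$ the bound holds with no constant. For $0\leq r<T_0$, Step 1 bounds $|X(t+r)-X(t)|$ by $M_{T_0}$, so $C_\varepsilon=M_{T_0}+wT_0$ works.

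The main obstacle is Step 1: making the comparison uniform in $t$ while using only the uniform limits \eqref{eq:uniformends}. On $\Z$ the discrete kernel tails must be handled, but these are standard.
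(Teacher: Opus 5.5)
Your proposal is correct and follows essentially the same route as the paper: comparison with the heat kernel from below and with $e^{\amax r}$ times the heat kernel from above, using a tail level small enough that its growth over time $T$ stays below the left-side threshold, gives bounded displacement over bounded times, and the global mean speed handles large $r$. The detour in Step 1 through the nonlinear $\amax$-equation and continuous dependence is unnecessary: once you take the tail level $\eta$ with $\eta e^{\amax T}<1/4$, your own linear supersolution bound already suffices, which is exactly the paper's choice $e^{\amax T}\delta<1/3$.
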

\begin{proof}
We give the proof only for the case \(\mathcal{S}=\mathbb{Z}\); the proof for the case \(\mathcal{S}=\mathbb{R}\) is similar.
    When \(S = \mathbb{Z}\), let \(P_s(n,m)\) be the heat kernel. Then
    \begin{equation}\label{eq:subsuper-solution}
    \sum_m P_r(n,m)u(t,m) \le u(t+r,n) \le e^{\bar{a}r} \sum_m P_r(n,m)u(t,m) 
    \end{equation}
    Assume without loss of generality that
        \begin{equation}\label{eq:uniform-bdd}
        \left\{
        \begin{aligned}
            u(t,n) \ge \frac{2}{3} \qquad &\text{ if } n \le {X(t)-C},  \\
            u(t,n) \le \frac{1}{3} \qquad &\text{ if } n \ge {X(t)+C}, 
        \end{aligned}
        \right. 
    \end{equation} 
    Choose \(\delta\) such that \(e^{\bar{a}T}\delta < 1/3\). Then there exist \(K_\delta, D_\delta\) such that
    \[
    \begin{cases}
        u(t,n) < \delta  \qquad \text{ if } n \ge {X(t)+K_\delta} \\[6pt]
        \sup\limits_{0 \le s \le T} \sum\limits_{|m-n| \ge D_\delta} P_s(n,m) < \delta
    \end{cases}
    \]    
    Thus, when {\(n \ge X(t)+K_\delta+D_\delta\)},
    \[
    \begin{aligned}
        u(t+r,n) &\le e^{\bar{a}r} \sum_m P_r(n,m)u(t,m) \\
        &= e^{\bar{a}r} \sum_{{m < X(t)+K_\delta}} P_r(n,m)u(t,m)
        + e^{\bar{a}r} \sum_{{m \ge X(t)+K_\delta}} P_r(n,m)u(t,m) \\
        &\le e^{\bar{a}r} \sum_{|m-n| \ge D_\delta} P_r(n,m)u(t,m)
        + e^{\bar{a}r} \sum_{{m \ge X(t)+K_\delta}} P_r(n,m){\delta} \\
        &\le 2e^{\bar{a}r}\delta < \frac{2}{3}.
    \end{aligned}
    \]    
    Then from the first inequality in \eqref{eq:uniform-bdd}, we obtain {$X(t)+K_\delta+D_\delta \ge X(t+r)-C$,
    i.e.,
    $$
    X(t+r)-X(t) < K_\delta+D_\delta+C.
    $$}
    Similarly, applying the previous inequality in \eqref{eq:subsuper-solution} and the second inequality in \eqref{eq:uniform-bdd}, one can prove that
    ${X(t+r)-X(t)} \text{ has a lower bound for } t \in \mathbb{R}, \, r \in [0,T].$
    This proves the first assertion. 
    
    For large $r$, \eqref{eq:displacement} follows from the definition of global mean speed; bounded $r$ are covered by the first assertion.
\end{proof}

\begin{proposition}\label{prop:tail}
	Every transition front with positive global mean speed $w$ satisfies \eqref{eq:fronttail}.
\end{proposition}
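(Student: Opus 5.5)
The plan is to read the tail bound off the uniform local growth estimate of Lemma~\ref{lem:growth}, applied \emph{forward} in time at a point lying far ahead of the interface. Since $u<1$ everywhere, a long period of exponential growth at rate close to $\lambda_1$ forces the starting value to be exponentially small. The only link to the front's speed is the displacement bound \eqref{eq:displacement} of Lemma~\ref{lem:displacement}. That bound controls how long the interface needs to reach the point.

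\emph{Choice of parameters.} Fix $q\in(0,\lambda_1/w)$. Choose $\varepsilon>0$ and $\delta\in(0,1)$ small enough that
\[
 \frac{\lambda_1-\varepsilon-\amax\delta}{w+\varepsilon}>q .
\]
Let $R,C\geq1$ be the constants of Lemma~\ref{lem:growth} for this $\varepsilon$. By \eqref{eq:uniformends}, pick $K_\delta$ such that $u(s,y)\leq\delta$ whenever $y\geq X(s)+K_\delta$, for all $s\in\R$. Let $C_\varepsilon$ be the constant in \eqref{eq:displacement}.

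\emph{Main step.} Fix $(t,x)$ and set $D=x-X(t)$. Define
\[
 T=\frac{D-R-K_\delta-C_\varepsilon}{w+\varepsilon}.
\]
For $s\in[t,t+T]$, Lemma~\ref{lem:displacement} gives $X(s)\leq X(t)+(w+\varepsilon)(s-t)+C_\varepsilon\leq x-R-K_\delta$. Hence the whole box $[x-R,x+R]\cap\Sspace$ lies beyond $X(s)+K_\delta$, and $u\leq\delta$ on $[t,t+T]\times([x-R,x+R]\cap\Sspace)$.

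If $T\geq2$, Lemma~\ref{lem:growth} with $y=x$ applies. Using $u<1$, it yields
\[
 u(t,x)\leq C e^{-(\lambda_1-\varepsilon-\amax\delta)T}.
\]
Substituting the value of $T$, this is at most $C' e^{-qD}$ once $D\geq0$, with $C'$ depending only on $q$ through $\varepsilon,\delta,R,K_\delta,C_\varepsilon$.

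If $T<2$, or $D$ is negative, then $D$ is bounded above by a constant $D_0$. In that range the trivial bound $u\leq1\leq e^{qD_0}e^{-qD}$ suffices. Taking $C_q=\max\{C',e^{qD_0}\}$ gives \eqref{eq:fronttail}, and the minimum with $1$ is automatic. Lemma~\ref{lem:growth} applies directly since the front is an entire solution, so no restriction $t\geq1$ arises.

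\emph{Main obstacle.} The delicate point is uniformity: all constants must be independent of $t$, $x$ and $g$. This is exactly what Lemma~\ref{lem:growth} provides, because its $R$ and $C$ do not depend on $g$ or $y$. The other ingredient is the \emph{uniform-in-$t$} displacement bound \eqref{eq:displacement}, which follows from the global mean speed. In the lattice case the same argument works verbatim with integer boxes.
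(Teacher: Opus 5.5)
Your proposal is correct and follows the paper's proof essentially step for step. You fix $q$ and choose parameters with $(\lambda_1-\varepsilon-\amax\delta)/(w+\varepsilon)>q$, use \eqref{eq:displacement} to keep the box $[x-R,x+R]$ in the region $u\le\delta$ up to time $T$, apply Lemma~\ref{lem:growth} together with $u\le1$, and absorb bounded distances from the interface into $C_q$. The only cosmetic difference is that the paper uses a separate parameter $\zeta$ for the displacement error, whereas you reuse $\varepsilon$.
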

\begin{proof}
	Fix $q\in(0,\lambda_1/w)$. Choose $\zeta,\varepsilon,\delta>0$ so that
	\[
	b:=\lambda_1-\varepsilon-\amax\delta>0,
	\qquad \frac{b}{w+\zeta}>q.
	\]
	Let $R$ be as in Lemma~\ref{lem:growth}, and choose $K_\delta$ such that $u(t,x)\leq\delta$ for $x\geq X(t)+K_\delta$. By Lemma~\ref{lem:displacement}, the interval $[y-R,y+R]$ remains in this region for elapsed times between zero and
	\[
	T=\frac{y-X(t)-K_\delta-R-C_\zeta}{w+\zeta}.
	\]
	If $T\geq2$, Lemma~\ref{lem:growth} and $u\leq1$ imply $u(t,y)\leq Ce^{-bT}$. Enlarging $C_q$ to cover bounded distances from the interface gives \eqref{eq:fronttail}.
\end{proof}

\section{Properties of the linearized equation}\label{sec:weight}

We next construct a positive decaying weight at almost every element of the hull. The weight solves the spectral-edge equation except at one point; it need not be an eigenfunction.

\begin{lemma}\label{lem:sign}
Let $v$ be a nonzero real solution of $\Lop_gv=\lambda_1v$. If $v$ is square integrable near either spatial infinity, then $v$ has a strict constant sign on $\Sspace$.
\end{lemma}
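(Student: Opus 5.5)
The plan is a Sturm oscillation / variational argument at the spectral edge. By symmetry (reflection $x\mapsto -x$ maps $\hull$-type coefficients to reflected ones with the same spectral supremum), it suffices to treat $v$ square integrable near $+\infty$. Suppose $v$ changes sign, or vanishes somewhere. In the continuous case a zero $x_0$ of $v$ with $v\not\equiv0$ is simple. On the lattice, $v(n_0)=0$ forces $v(n_0-1)v(n_0+1)<0$ from the equation, since $v(n_0+1)+v(n_0-1)=(\lambda_1+2-g(n_0))v(n_0)=0$ and not both vanish. Either way, in each case I will produce a nodal region of $v$: a half-line $[x_0,+\infty)\cap\Sspace$, or a piece of it, on which $v$ is a Dirichlet-type solution that is square integrable.

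Concretely, in the continuous case take a zero $x_0$ and set $\varphi=v\mathbf 1_{[x_0,\infty)}$. Then $\varphi\in H^1(\R)$. Integrating by parts, using $v\in L^2$ near $+\infty$ together with interior estimates giving $v'\in L^2$ and $vv'\to0$ along a sequence, yields
\[
\int(-|\varphi'|^2+g\varphi^2)=\lambda_1\int\varphi^2 .
\]
So $\varphi$ attains the supremum of the Rayleigh quotient, which equals $\lambda_1$. Hence $\varphi$ lies in the form domain and is a maximizer. A standard argument, perturbing in the form domain, shows a maximizer of the quadratic form at the top of the spectrum is a weak, hence strong, solution of $\Lop_g\varphi=\lambda_1\varphi$ on all of $\R$. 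But $\varphi$ vanishes on $(-\infty,x_0)$, so by ODE uniqueness $\varphi\equiv0$, i.e. $v\equiv0$ on $[x_0,\infty)$, and then $v\equiv0$, a contradiction.

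In the lattice case, define $\varphi(n)=v(n)$ for $n\ge n_0$ and $0$ otherwise, where $n_0$ is chosen so that $v(n_0-1)v(n_0)\le0$ with $v(n_0)\ne0$ (a sign change or a zero). Summation by parts gives
\[
\langle\Lop_g\varphi,\varphi\rangle=\lambda_1\|\varphi\|^2+v(n_0-1)v(n_0)\ge\lambda_1\|\varphi\|^2 .
\]
If the inequality is strict, this contradicts $\lambda_1=\sup\sigma$. If equality holds, $\varphi$ is a maximizer, hence $\Lop_g\varphi=\lambda_1\varphi$. Evaluating at $n_0-1$ gives $\varphi(n_0)=0$, a contradiction.

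It then remains to exclude zeros without sign change, which is already covered: simple zeros in the continuous case, and the sign-change configuration on the lattice. So $v$ has strict constant sign.

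The main obstacle is justifying the boundary terms at $+\infty$ in the continuous case, namely $\liminf |v v'|=0$. I would handle this by using $v\in L^2(x_0,\infty)$ and the equation with bounded $g$ to get $v'\in L^2$ by a Caccioppoli estimate on unit intervals, so $vv'\in L^1$. I also need care with the maximizer-is-eigenfunction step, which uses only that $\lambda_1-\Lop_g\ge0$ as a form: a nonnegative form vanishing at $\varphi$ forces $(\lambda_1-\Lop_g)\varphi=0$ by Cauchy–Schwarz.
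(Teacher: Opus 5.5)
Your proposal is correct and follows essentially the paper's own proof. In both, you truncate $v$ at a zero or sign change to get an element of the form domain with $\langle\varphi,(\lambda_1-\Lop_g)\varphi\rangle\le0$, use nonnegativity of $\lambda_1-\Lop_g$ to force $\varphi$ into the kernel, and conclude by uniqueness for the second-order equation, with reflection covering $-\infty$.

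One small slip: in the lattice identity the boundary term has the wrong sign. It should read $\langle\Lop_g\varphi,\varphi\rangle=\lambda_1\|\varphi\|^2-v(n_0-1)v(n_0)$, and this corrected form is what yields your inequality $\ge\lambda_1\|\varphi\|^2$.
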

\begin{proof} Set $A=\lambda_1 I-\Lop_g$. Since $\Lop_g$
is self-adjoint and $\lambda_1=\sup\sigma(\Lop_g)$,
the spectral theorem gives $\sigma(A)\subset[0,\infty)$.
Thus the associated quadratic form satisfies
\[
  Q_A(f)=<f,Af>\geq0,
\]
Here $f\in \ell^2(\mathbb Z)$ in the discrete case
and $f\in H^1(\mathbb R)$ in the continuous case.
Moreover, $Q_A(f)=0$ implies
$f\in\ker A$, and hence $Af=0$.

 First consider that $\Sspace=\Z$ and $v$ is square summable at $+\infty$. For $f=v\mathbf1_{[j+1,+\infty)}$, direct summation gives
\[
 0\leq\ip{f}{(\lambda_1-\Lop_g)f}=v(j)v(j+1).
\]
If equality holds, positivity of the bounded self-adjoint operator $\lambda_1-\Lop_g$ implies $(\lambda_1-\Lop_g)f=0$. Evaluating this equation at $j$ and $j+1$ forces two consecutive values of $v$ to vanish. The recurrence then gives $v\equiv0$, a contradiction. Therefore all consecutive values have the same strict sign.

Then for $\Sspace=\R$, a standard $L^2$ energy estimate gives $v'\in L^2$ on the square-integrable half-line. If $v(y)=0$, extend this half-line portion by zero across $y$. The extension belongs to $H^1(\R)$ and has zero quadratic form for $\lambda_1-\Lop_g$. Hence it is a global weak null solution. Its derivative jump at $y$ vanishes, so $v(y)=v'(y)=0$, which again implies $v\equiv0$. Reflection treats square integrability at $-\infty$ in both cases.
\end{proof}

\begin{proposition}\label{prop:weight}
For $\mu_H$-almost every $g\in\hull$, there exist $\psi>0$ and $d>0$ such that $\psi(0)=1$ and
\begin{equation}\label{eq:defect}
 (\lambda_1-\Lop_g)\psi=d\delta_0,
 \qquad \psi(x)\leq C_\gamma e^{-\gamma|x|}
 \quad\text{for every }0<\gamma<L(\lambda_1).
\end{equation}
On $\Z$, $\delta_0$ is the unit sequence and the adjacent ratios of $\psi$ are bounded.
 On $\R$, the identity is distributional, $\delta_0$ is the Dirac measure, and $\psi$ is continuous, piecewise $C^{2,\alpha}$, and belongs to $H^1(\R)$, with {the logarithmic derivative satisfying $$\left|\frac{\psi'}{\psi}\right|\leq C$$ away from the origin.}
\end{proposition}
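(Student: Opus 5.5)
The plan is to glue two half-line decaying solutions of the spectral-edge equation at the origin. Since $L(\lambda_1)>0$, Oseledets' theorem applied to the cocycle $(\tau,\Phi_{\lambda_1})$ over the uniquely ergodic hull (with $\log\norm{A_{\lambda_1}}$ integrable, indeed bounded) gives a full $\mu_H$-measure set of $g$ with the following properties. At $+\infty$ there is a one-dimensional stable direction $V^+(g)$ such that solutions $v_+$ of $\Lop_g v=\lambda_1 v$ with initial data in $V^+(g)$ satisfy $\frac1x\log|v_+(x)|\to -L(\lambda_1)$. Applying the same theorem to the inverse cocycle gives an analogous direction $V^-(g)$ at $-\infty$. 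On this set $v_\pm(x)\leq C_\gamma e^{-\gamma|x|}$ for each $\gamma<L(\lambda_1)$, on the respective half-lines. In the continuous case the sup of $|v|+|v'|$ over unit intervals is controlled by the transfer matrix, so the pointwise exponent bound suffices.

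By Lemma~\ref{lem:sign}, $v_+$ and $v_-$ are square integrable near $+\infty$ and $-\infty$ respectively, so each has a strict constant sign on all of $\Sspace$. Normalize them to be positive with $v_\pm(0)=1$, and set $\psi=v_+$ on $[0,\infty)$ and $\psi=v_-$ on $(-\infty,0]$. Then $\psi>0$, $\psi(0)=1$, the decay estimate holds, and $(\lambda_1-\Lop_g)\psi$ vanishes away from the origin. At the origin it equals $d\delta_0$ with:
\begin{itemize}
\item on $\R$: $d=v_-'(0)-v_+'(0)$, the jump in $-\psi'$;
\item on $\Z$: $d=v_+(-1)-v_-(-1)$, obtained by comparing the equation for $v_+$ at $0$ with $\psi(-1)=v_-(-1)$.
\end{itemize}

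The key step is $d>0$. First, $d\neq0$. If $d=0$, then $v_+=v_-$ would be a global solution that is $L^2$ on both sides, i.e.\ an $L^2$ eigenfunction at $\lambda_1$. For a fixed energy this occurs only for a $\mu_H$-null set of $g$; this is the ergodic fact cited from \citep{Damanik}, which I remove from the full-measure set. Equivalently, $V^+(g)\neq V^-(g)$ a.e. by this same argument.

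For the sign, I use the quadratic form. On $\R$, $\psi\in H^1$ and $Q_A(\psi)=\langle\psi,(\lambda_1-\Lop_g)\psi\rangle=d\,\psi(0)=d$, obtained by integrating by parts on each half-line, where the boundary terms at infinity vanish by exponential decay. Since $Q_A\geq0$ by the argument of Lemma~\ref{lem:sign}, we get $d\geq0$, hence $d>0$. On $\Z$, $\psi\in\ell^2$ and the same computation gives $\langle\psi,A\psi\rangle=d\psi(0)=d\geq0$.

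It remains to establish the regularity and ratio bounds. On $\Z$, the recurrence $v(n+1)+v(n-1)=(\lambda_1+2-g(n))v(n)$ with positive terms gives
\[
\frac{v(n\pm1)}{v(n)}\leq \lambda_1+2-\amin .
\]
The reverse bound follows by applying the same recurrence one step over. On $\R$, elliptic regularity with $g\in C^{0,\alpha}$ gives piecewise $C^{2,\alpha}$. The bound on $\psi'/\psi$ comes from Harnack's inequality on unit intervals for positive solutions of $v''=(\lambda_1-g)v$ together with the interior gradient estimate, with constants uniform since $\norm{g}_\infty\leq\amax$. Near the origin on each side I use the one-sided versions, since $v_\pm$ are global positive solutions.

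The main obstacle is the full-measure selection ensuring $d\neq0$, i.e.\ that $\lambda_1$ is not an eigenvalue for a.e. $g$. I will rely on the cited measure-zero eigenvalue result; in the continuous case this requires the analogous statement for ergodic Schrödinger operators on $\R$, which follows from the same argument, since the set of $g$ with an eigenvalue at a fixed $E$ gives a translation-invariant set whose measure would force a point mass in the density of states.
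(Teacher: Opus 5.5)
Your proposal is correct, and its overall structure coincides with the paper's. This covers the gluing of the two half-line decaying solutions, and the formula for $d$, which on $\Z$ is equivalent to the paper's $\lambda_1+2-g(0)-v^+(1)-v^-(-1)$ via the equation for $v_+$ at $0$. It also covers the form identity $d=\langle\psi,(\lambda_1-\Lop_g)\psi\rangle\geq0$ and the recurrence and Harnack bounds.

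The one genuine difference is how you obtain $d\neq0$, i.e.\ linear independence of $v^+$ and $v^-$. The paper applies the two-sided Oseledets theorem to the invertible cocycle. On a full-measure set this splits $\R^2$ into two distinct invariant lines. A vector $v$ in the line with exponent $-L(\lambda_1)$ satisfies $\frac1n\log\norm{\Phi_{\lambda_1}(n,g)v}\to-L(\lambda_1)$ as $n\to\pm\infty$, so the solution decaying at $+\infty$ necessarily grows at $-\infty$. Independence, and with it the absence of an $L^2$ eigenfunction at $\lambda_1$, is therefore automatic, and $d=0$ is excluded simply by uniqueness for the second-order equation.

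You instead run the one-sided theorem forward and backward separately. This produces $V^\pm(g)$ but says nothing about their transversality. You then import the fact that a fixed energy is an eigenvalue only for a null set of phases. In the continuum this is Pastur's argument, which also needs that the density of states $\nu$ has no atom at $\lambda_1$. That fact is available here, e.g.\ from the Thouless formula and finiteness of $L(\lambda_1)$, but you do not state it.

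Your route is valid, and the fact it uses holds even without $L(\lambda_1)>0$. It is, however, heavier than necessary: what you call the main obstacle disappears once you use the two-sided version of the theorem you already invoke.
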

\begin{proof}
By Oseledets' multiplicative ergodic theorem
\citep{Oseledets}, applied to the invertible spatial
transfer cocycle at $\lambda_1$, there is a set of
full $\mu_H$-measure on which the cocycle has two
distinct one-dimensional invariant subspaces with
Lyapunov exponents $\pm L(\lambda_1)$.
The corresponding real solutions $v^+$ and $v^-$
decay exponentially at $+\infty$ and $-\infty$,
respectively, at every rate below $L(\lambda_1)$.
In the continuous case we use the continuous-time
version of the theorem for the translation flow.
Lemma~\ref{lem:sign} then allows us to normalize
these solutions by
\[
  v^\pm>0,\qquad v^\pm(0)=1.
\]
Set
\[
 \psi(x)=\begin{cases}v^-(x),&x\leq0,\\v^+(x),&x\geq0.\end{cases}
\]
The residual is supported at zero, with coefficient
\[
 d=\begin{cases}
 \lambda_1+2-g(0)-v^+(1)-v^-(-1),&\Sspace=\Z,\\
 (v^-)'(0)-(v^+)'(0),&\Sspace=\R.
 \end{cases}
\]
Since $\psi(0)=1$, the nonnegative quadratic form yields
$d=\ip{\psi}{(\lambda_1-\Lop_g)\psi}\geq0$, interpreted in the form sense on $\R$. If $d=0$, uniqueness for the second-order equation would identify $v^+$ and $v^-$, contrary to their independence. Thus $d>0$.

The decay follows from the choice of $v^\pm$. On $\R$, the Harnack inequality and the equation give the logarithmic derivative bound. On $\Z$, the equation
\[
 \frac{\psi(n+1)+\psi(n-1)}{\psi(n)}
 =\lambda_1+2-g(n)-d\delta_{n0}
\]
and positivity give bounds for the adjacent ratios in both directions.
\end{proof}

We denote this full-measure set by $\mathcal G\subset\hull$ and call its elements regular. The constants in \eqref{eq:defect} may depend on $g$. The point defect belongs to the test function equation, not to the reaction coefficient.

\begin{lemma}\label{lem:weighted}
Fix $g\in\mathcal G$, and let $X_\psi=L^1(\Sspace,\psi\,dm)$ with norm $\norm{f}_\psi=\mass{\psi|f|}$. Here $m$ denotes Lebesgue measure on $\mathbb R$ or
counting measure on $\mathbb Z$, and $\psi$ is the
weight in \eqref{eq:defect} corresponding to the
fixed phase $g$. The semigroup $P_t=e^{t\Lop_g}$ is positive and strongly continuous on $X_\psi$, and
\begin{equation}\label{eq:semigroup}
 \norm{P_tf}_\psi\leq e^{\lambda_1t}\norm{f}_\psi.
\end{equation}
Let $z$ be a positive mild solution of
\[
  z_t=(\Lop_g-\lambda_1)z
\]
on an open time interval $I$, with
$z\in C(I;X_\psi)$. Then, for any $t_1,t_2\in I$
with $t_1<t_2$,
\begin{equation}\label{eq:massloss}
  \mass{\psi z(t_2)}-\mass{\psi z(t_1)}
  =-d\int_{t_1}^{t_2}z(t,0)\,dt.
\end{equation}
\end{lemma}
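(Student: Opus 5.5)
The plan is to work entirely through duality against the weight $\psi$ of Proposition~\ref{prop:weight}, using that $\Lop_g$ is self-adjoint and $\psi$ is a supersolution at level $\lambda_1$. The basic observation is that $(\Lop_g-\lambda_1)\psi=-d\delta_0\leq0$, so $\psi$ is a positive $\lambda_1$-supersolution. For the heat semigroup this gives
\[
 e^{t(\Lop_g-\lambda_1)}\psi\leq\psi,
\]
which I would obtain by parabolic comparison with the stationary supersolution. Comparison is legitimate because $\psi$ is bounded and $P_t$ has a kernel with Gaussian tails (on $\Z$, a kernel decaying faster than any exponential). Symmetry of the kernel $p_t(x,y)$ of $P_t$ then yields, for $f\geq0$,
\[
 \int\psi P_tf\,dm=\int f\,P_t\psi\,dm\leq e^{\lambda_1t}\int\psi f\,dm.
\]
Splitting $f=f^+-f^-$ gives \eqref{eq:semigroup}, and positivity of $P_t$ is immediate from positivity of the kernel. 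Before this, I must check that $P_tf$ is well defined for $f\in X_\psi$. Since $\psi$ decays exponentially, $f$ may grow like $e^{\gamma|x|}$ in an integrated sense, and Gaussian kernels handle this on $\R$. On $\Z$ one uses that $p_t(n,m)$ decays like $e^{-c|n-m|\log|n-m|}$. The bounded-ratio and log-derivative bounds on $\psi$ give $\psi(y)\leq Ce^{C|x-y|}\psi(x)$, which makes the kernel estimates uniform.

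Strong continuity on $X_\psi$ comes next. By \eqref{eq:semigroup} the operators $P_t$ are uniformly bounded for $t\in[0,1]$, so it suffices to check continuity on a dense class, namely $C_c$ functions or finitely supported sequences. On $C_c$ functions, $P_tf\to f$ uniformly on compacts. The tails are controlled by Gaussian decay against the weight bound $\psi(y)\leq Ce^{C|x-y|}\psi(x)$, after which dominated convergence finishes the argument.

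For the mass identity \eqref{eq:massloss}, I would first treat smooth data. Let $S_t=e^{-\lambda_1t}P_t$. For $f\in C_c$, compute
\[
 \frac{d}{dt}\int\psi S_tf\,dm=\int\psi(\Lop_g-\lambda_1)S_tf\,dm=\int S_tf\,(\Lop_g-\lambda_1)\psi\,dm=-d\,(S_tf)(0).
\]
The integration by parts is justified because $S_tf$ and its derivatives decay like Gaussians while $\psi$ and $\psi'$ are bounded. On $\R$, the jump of $\psi'$ at $0$ produces exactly the term $d\,\delta_0$. This gives the identity
\[
 \int\psi S_{t}f\,dm-\int\psi f\,dm=-d\int_0^t(S_rf)(0)\,dr,
\]
valid for $f\in C_c$. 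Extending to general $f\in X_\psi$ requires the map $f\mapsto\int_0^t(S_rf)(0)\,dr$ to be continuous on $X_\psi$. This follows by Fubini from the integrated kernel bound
\[
 \int_0^tp_r(0,y)\,dr\leq C_t\,\psi(y),
\]
which is itself the identity above applied to nonnegative approximations of a delta at $y$. Alternatively, it follows directly from $\psi(y)\geq c\,e^{-C|y|}$ together with the kernel estimates.

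Finally, a mild solution satisfies $z(t)=S_{t-t_1}z(t_1)$ by definition. Setting $f=z(t_1)$ then gives \eqref{eq:massloss}. The main obstacle I expect is the tail control in the continuous case at $t=0^+$. There $\int_0^t p_r(0,y)\,dr$ has a singular but integrable behaviour near $y=0$, and one must confirm that $z(t,0)$ makes sense as a locally integrable function of $t$. The plan is to use the regularizing effect for $t>t_1$ together with positivity and monotone convergence.
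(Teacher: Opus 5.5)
Your proposal is correct, but it runs the argument in the reverse order from the paper.

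The paper proves the mass identity \eqref{eq:massloss} first and deduces \eqref{eq:semigroup} from it. For $f\geq0$ the right-hand side is nonpositive, so the $\psi$-mass of $e^{-\lambda_1t}P_tf$ is nonincreasing, and $|P_tf|\leq P_t|f|$ handles signed data. On $\Z$ the paper needs no kernel estimates at all. The adjacent-ratio bounds make $\Lop_g$ bounded on $X_\psi$, so $\mass{\psi(\Lop_g-\lambda_1)f}=-df(0)$ holds for every $f\in X_\psi$ by index shifting. On $\R$ it tests against $\psi\chi_R$ and uses $|\psi'|\leq C\psi$ to bound the cutoff error by $C(R^{-1}+R^{-2})\int_{t_1}^{t_2}\norm{z(t)}_\psi\,dt$. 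This requires only finite weighted mass of $z$.

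You instead obtain the contraction by duality: $\psi$ is a $\lambda_1$-supersolution, so $e^{-\lambda_1t}P_t\psi\leq\psi$, and kernel symmetry transfers this to $X_\psi$. You then prove the identity on $C_c$ data by integrating by parts against the Gaussian-decaying $P_tf$ and its derivatives. Your supersolution inequality is just the dual form of the same identity, since $\psi-e^{-\lambda_1t}P_t\psi=d\int_0^te^{-\lambda_1r}p_r(\cdot,0)\,dr$.

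What your route gains is an explicit version of the step the paper compresses into ``approximation and density''. The integrated kernel bound $d\int_0^te^{-\lambda_1r}p_r(0,y)\,dr\leq\psi(y)$, obtained from the identity via Fatou, does two things. It shows that $f\mapsto\int_0^t(e^{-\lambda_1r}P_rf)(0)\,dr$ is continuous on $X_\psi$. It also gives a meaning to $z(t,0)$ for an $X_\psi$-valued mild solution. The cost is heavier reliance on Gaussian bounds for the kernel and its derivatives, which the paper's route avoids, especially on $\Z$.
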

\begin{proof}
On $\Z$, the {adjacent ratio bounds of $\psi$} imply that $\Lop_g$ is bounded on $X_\psi$. Absolute convergence justifies index shifts, giving
\[
 \mass{\psi(\Lop_g-\lambda_1)f}=-df(0).
\]
The semigroup is positive by a constant shift of the diagonal. Integration in time proves \eqref{eq:massloss}; \eqref{eq:semigroup} follows first for nonnegative data and then from $|P_tf|\leq P_t|f|$.

On $\R$, the derivative estimate gives $\psi(x)/\psi(y)\leq e^{C|x-y|}$. Gaussian kernel bounds yield a strongly continuous heat semigroup on $X_\psi$, and the bounded potential is a bounded perturbation. Test the equation against $\psi\chi_R$, where $\chi_R=1$ on $[-R,R]$ and vanishes outside $[-2R,2R]$. The cutoff error is bounded by
\[
 C(R^{-1}+R^{-2})\int_{t_1}^{t_2}\norm{z(t)}_\psi\,dt,
\]
which tends to zero {as $R \to +\infty$}. 
This proves \eqref{eq:massloss} for compactly supported data. Positivity, approximation and density give \eqref{eq:semigroup} and the identity for the stated mild solutions. 
At interior times these solutions are locally classical.
\end{proof}

\section{A Liouville theorem for positive ancient solutions}\label{sec:ancient}

In the continuous setting, the following monotonicity property is rooted
in the separation theory for positive ancient solutions
\citep{Pinchover88}.
Lin and Zhang \citep[Remark~2.1]{LinZhang19} explicitly note
the applicability of this approach to time-independent elliptic
operators normalized by their generalized principal eigenvalue.
In our self-adjoint setting, the generalized principal
eigenvalue of $-\Lop_g$ equals
$\inf\sigma_{L^2}(-\Lop_g)=-\lambda_1$,
so their normalization agrees with ours.
We include a self-contained proof covering both the
continuous and lattice settings.
\begin{lemma}\label{lem:ancientmono}
Let $g$ be real and bounded, and locally H\"older continuous when $\Sspace=\R$. Set $\Lop_g=\D+g$ and $\lambda_1=\sup\sigma(\Lop_g)$. Every nonnegative ancient solution of
\begin{equation}\label{eq:ancient}
 z_t=(\Lop_g-\lambda_1)z,
 \qquad (t,x)\in(-\infty,0)\times\Sspace,
\end{equation}
is nondecreasing in time at each point.
\end{lemma}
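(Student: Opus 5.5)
The plan is to reduce the statement to separable ancient solutions $e^{\Lambda t}\varphi(x)$ by a Choquet-type decomposition, and then to show $\Lambda\geq0$ by a ground-state (Allegretto--Piepenbrink) argument. Let $\mathcal C$ be the convex cone of nonnegative ancient solutions of \eqref{eq:ancient}. By the strong maximum principle, a nonzero element of $\mathcal C$ is strictly positive; on $\Z$ this follows from positivity of the off-diagonal part of $\D$. So we may restrict to positive $z$ and normalize $z(-1,0)=1$. Call the resulting base $\mathcal B$.

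\textbf{Step 1: a Harnack inequality with a time lag.} We need uniform bounds of the form
\[
 z(t-1,x)\leq C\,z(t,x),\qquad z(t,x)\leq C_K\,z(t',y)
\]
for all $z\in\mathcal C$. The first holds for all $t\leq0$ and $x\in\Sspace$, with $C$ depending only on $\norm{g}_\infty$. The second holds for $(t,x)$, $(t',y)$ in a compact set $K$ with $t'\geq t+1$. On $\R$ this is the parabolic Harnack inequality for $z_t=z_{xx}+(g-\lambda_1)z$, applied as in the proof of Lemma~\ref{lem:growth}. On $\Z$ it follows from the sub-solution comparison \eqref{eq:subsuper-solution} together with $P_1(x,x)\geq c>0$. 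The constant $C$ is uniform in $x$ because the coefficients are bounded and we compare at the same spatial point. Together with interior regularity, this makes $\mathcal B$ compact and metrizable for locally uniform convergence. Choquet's theorem then represents each $z\in\mathcal B$ as an integral over extremal points of $\mathcal B$. Pointwise evaluation is continuous and the Harnack bounds give domination, so time monotonicity passes through the integral. It therefore suffices to treat extremal $z$.

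\textbf{Step 2: extremal solutions are separable.} Let $z$ be extremal and $c=1/C$. Then $z-c\,z(\cdot-1,\cdot)$ and $c\,z(\cdot-1,\cdot)$ both lie in $\mathcal C$, since time translates of ancient solutions are ancient and Step 1 gives nonnegativity. Extremality forces $z(t-1,x)=k\,z(t,x)$ for some constant $k>0$. Iterating, and using that the same decomposition works with any lag $h\in(0,1]$ (with a lag-dependent constant), gives $z(t,x)=e^{\Lambda t}\varphi(x)$. The constant satisfies $e^{\Lambda h}=$ the ratio for lag $h$, which is multiplicative in $h$ and continuous. Hence $\varphi>0$ solves $\Lop_g\varphi=(\lambda_1+\Lambda)\varphi$ on $\Sspace$.

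\textbf{Step 3: $\Lambda\geq0$, the main obstacle.} Here we must show that a positive, not necessarily $L^2$, solution of $\Lop_g\varphi=E\varphi$ forces $E\geq\lambda_1=\sup\sigma(\Lop_g)$. This is the Allegretto--Piepenbrink direction. We would prove it by the ground-state transform. For $f$ compactly supported, write $f=\varphi h$. On $\R$, integration by parts gives
\[
 \ip{f}{(E-\Lop_g)f}=\int\varphi^2|h'|^2\,dx\geq0.
\]
On $\Z$ one uses the analogous identity
\[
 \ip{f}{(E-\Lop_g)f}=\sum_n\varphi(n)\varphi(n+1)\bigl(h(n+1)-h(n)\bigr)^2\geq0.
\]
Thus $E-\Lop_g\geq0$ on a form core, so $\sup\sigma(\Lop_g)\leq E$, that is, $\Lambda\geq0$. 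Each extremal solution is therefore nondecreasing in $t$, and so is every $z\in\mathcal C$.

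The delicate points are twofold. First, we need the uniform-in-$x$ lagged Harnack constant in Step 1, which is what makes the decomposition in Step 2 legitimate. Second, we need metrizable compactness of $\mathcal B$ so that Choquet applies. If Choquet proves awkward, a fallback is to apply the Step 2 argument directly to $z_h:=z(\cdot+h)-z$. One would then show via a Krein--Milman approximation that $z_h$ is a limit of nonnegative combinations of separable solutions with $\Lambda\geq0$.
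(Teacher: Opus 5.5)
Your architecture matches the paper's: extreme rays of the cone of nonnegative ancient solutions, the same-point lagged Harnack inequality forcing extremals to be separable, the ground-state identity giving $\Lambda\ge0$, and a Krein--Milman/Choquet closure. Steps 2 and 3 are fine. The gap is in Step 1: the base $\mathcal B=\{z:\ z(-1,0)=1\}$ is \emph{not} compact for locally uniform convergence on $(-\infty,0)\times\Sspace$.

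Your Harnack inequality bounds $z(t,x)$ only by values at \emph{later} times. So the normalization at $(-1,0)$ controls $z$ only strictly before $-1$, and nothing controls it on $(-1,0)$. Concretely, take $g\equiv0$ on $\R$, so $\lambda_1=0$. Then $z_\Lambda(t,x)=e^{\Lambda(t+1)}\cosh(\sqrt\Lambda\,x)$ lies in $\mathcal B$ for every $\Lambda>0$, but $z_\Lambda(-1/2,0)=e^{\Lambda/2}\to\infty$. The same happens for general $g$ with $e^{\Lambda(t+1)}\varphi_\Lambda$, where $\varphi_\Lambda>0$ solves $\Lop_g\varphi=(\lambda_1+\Lambda)\varphi$.

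Restricting to times before $-1$ does not help. There $z_\Lambda\to0$ locally uniformly, so the normalization is lost in the limit and the set is not closed. Without a compact convex set, neither Choquet nor your Krein--Milman fallback applies, and the reduction to extremal solutions is unjustified.

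The missing idea is a normalizing functional that sees times arbitrarily close to $0$, used through a \emph{cap} rather than a base. The paper fixes $z\not\equiv0$ and picks a continuous $\rho>0$ with $0<J_\rho(z)=\int_{-\infty}^0\rho(t)z(t,0)\,dt<\infty$, e.g.\ $\rho(t)=e^t/(1+z(t,0))$. It then works with $K_\rho=\{v\ge0\text{ ancient}:\ J_\rho(v)\le1\}$.

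For a compact set $K$, choose a time interval $I\subset(-\infty,0)$ strictly after $K$. Integrating the Harnack chain $v(t,x)\le C_Kv(r,0)$, $r\in I$, against $\rho$ over $I$ gives a uniform bound on $K$. Closedness follows from Fatou; $J_\rho$ is only lower semicontinuous, which is why the cap $\{J_\rho\le1\}$ is used instead of $\{J_\rho=1\}$.

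Nonzero extreme points of $K_\rho$ have $J_\rho=1$ and span extreme rays of the cone. From there your Steps 2--3 go through verbatim, and Krein--Milman finishes, since time monotonicity is a closed convex condition.

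A minor related point: $J_\rho(v)>0$ for $v\neq0$ needs only forward positivity. Your requirement $z(-1,0)>0$ for every nonzero $z$ is not given by the strong maximum principle alone. It needs that a nonnegative ancient solution vanishing up to time $-1$ vanishes identically, which is a uniqueness statement for nonnegative solutions.
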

Here notice that no spatial boundedness or temporal growth assumption is imposed.
\begin{proof}
Let $\mathcal H_+$ be the cone of nonnegative ancient solutions, endowed with local uniform convergence. Fix $z\not\equiv0$. Choose a continuous function $\rho>0$ on $(-\infty,0)$ such that
\[
 0<J_\rho(z):=\int_{-\infty}^0\rho(t)z(t,0)\,dt<+\infty;
\]
for example, take $\rho(t)=e^t/(1+z(t,0))$. After multiplying $z$ by a constant, assume $J_\rho(z)\leq1$ and set
\[
 K_\rho=\{v\in\mathcal H_+:J_\rho(v)\leq1\}.
\]
We first show that $K_\rho$ is compact. Given a compact space-time set $K$, choose a compact time interval $I\subset(-\infty,0)$ lying strictly after $K$. A Harnack chain gives
\[
 v(t,x)\leq C_Kv(r,0),\qquad (t,x)\in K,\quad r\in I.
\]
On $\Z$, the same estimate follows by comparison with the killed heat kernel on a finite interval joining $x$ to zero; the elapsed times are bounded away from zero. Integrating against $\rho$ over $I$ gives a uniform bound on $K$. Interior estimates on $\R$, or the equation and neighboring-site bounds on $\Z$, give local equicontinuity. A diagonal subsequence converges locally to a solution, and Fatou's lemma preserves $J_\rho\leq1$. Thus $K_\rho$ is a compact convex subset of a locally convex space.

Every nonzero element of $\mathcal H_+$ with finite $J_\rho$ has strictly positive $J_\rho$. If $v$ is a nonzero extreme point of $K_\rho$, then $J_\rho(v)=1$; otherwise nearby scalar multiples give a nontrivial convex decomposition. Suppose $v=v_1+v_2$, with $v_i\in\mathcal H_+$. The functionals $J_\rho(v_i)$ are finite, positive for nonzero summands, and additive. Normalization of the summands and extremality show that each $v_i$ is a multiple of $v$. Thus $v$ generates an extreme ray of $\mathcal H_+$.

For every $h>0$, the same-point Harnack estimate gives
\begin{equation}\label{eq:timedomination}
 0\leq v(t-h,x)\leq C_hv(t,x),\qquad t<0,
\end{equation}
uniformly in $t$ and $x$. On $\Z$ this also follows from $v_t\geq-Bv$ for a fixed $B$. The past translate solves the same autonomous equation. The extreme-ray property therefore implies
$v(t-h,x)=k(h)v(t,x)$. Continuity and composition give $k(h)=e^{-\eta h}$ for some $\eta\in\R$, so
\[
 v(t,x)=e^{\eta t}\varphi(x),\qquad
 \varphi>0,\qquad \Lop_g\varphi=(\lambda_1+\eta)\varphi.
\]
Strict positivity follows from the strong maximum principle or the lattice recurrence.

If $\Lop_g\varphi=E\varphi$ with $\varphi>0$, the ground-state identity is
\begin{equation}\label{eq:groundidentity}
 E\norm{f}_2^2-\ip{f}{\Lop_g f}=
 \begin{cases}
 \displaystyle\sum_{n\in\Z}\varphi(n)\varphi(n+1)
 \left(\frac{f(n+1)}{\varphi(n+1)}-\frac{f(n)}{\varphi(n)}\right)^2,&\Sspace=\Z,\\[3mm]
 \displaystyle\int_\R\varphi(x)^2
 \left|\left(\frac{f}{\varphi}\right)'\right|^2dx,&\Sspace=\R,
 \end{cases}
\end{equation}
for finitely supported or smooth compactly supported $f$. Taking the supremum of the Rayleigh quotients shows $E\geq\lambda_1$. Hence $\eta\geq0$, and every extreme point of $K_\rho$ is nondecreasing in time. Time monotonicity is a closed convex condition. The Krein--Milman theorem yields it for all of $K_\rho$, including the chosen solution $z$.
\end{proof}

\begin{corollary}\label{cor:ancient}
For $g\in\mathcal G$, equation \eqref{eq:ancient} has no strictly positive ancient solution with finite $\psi$-weighted mass at an interior time.
\end{corollary}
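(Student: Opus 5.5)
We argue by contradiction, combining the time monotonicity of Lemma~\ref{lem:ancientmono} with the mass-loss identity \eqref{eq:massloss} of Lemma~\ref{lem:weighted}. Fix $g\in\mathcal G$ and suppose $z>0$ is an ancient solution of \eqref{eq:ancient} with $\mass{\psi z(t_0)}<+\infty$ for some $t_0<0$.

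The first step is to propagate finiteness of the weighted mass to all earlier times. By Lemma~\ref{lem:ancientmono}, $z$ is nondecreasing in time at each point. Hence $0<z(t,x)\leq z(t_0,x)$ for $t\leq t_0$, and so $\mass{\psi z(t)}\leq\mass{\psi z(t_0)}$ for all $t\leq t_0$. Strong continuity in $X_\psi$ on $(-\infty,t_0)$ should follow by dominated convergence. The solution is locally classical and continuous in $t$ pointwise, and it is dominated by $z(t_0)\in X_\psi$, so $z\in C((-\infty,t_0);X_\psi)$. We also need $z$ to be a mild solution in $X_\psi$ on this interval, i.e.\ $z(t)=e^{-\lambda_1(t-s)}P_{t-s}z(s)$. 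This is the one point requiring care, since a positive classical solution need not coincide with the semigroup solution without growth control. Here the domination by $z(t_0)\in X_\psi$ gives exactly the integrability needed for the standard uniqueness argument (testing against $\psi\chi_R$, as in the proof of Lemma~\ref{lem:weighted}), so I expect this to go through. On $\Z$ it is immediate, since $\Lop_g$ is bounded on $X_\psi$.

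Next, apply \eqref{eq:massloss} on $[t_1,t_2]\subset(-\infty,t_0)$:
\[
 \mass{\psi z(t_2)}-\mass{\psi z(t_1)}=-d\int_{t_1}^{t_2}z(t,0)\,dt<0,
\]
because $d>0$ and $z(t,0)>0$. So the weighted mass is strictly decreasing in time. On the other hand, pointwise monotonicity gives $z(t_2)\geq z(t_1)$, hence $\mass{\psi z(t_2)}\geq\mass{\psi z(t_1)}$. These two conclusions contradict each other.

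A quantitative version makes the contradiction even sharper. Letting $t_1\to-\infty$ and using $z(t,0)\geq z(t_1,0)$ would force the mass to tend to $+\infty$, which is incompatible with the uniform bound by $\mass{\psi z(t_0)}$. However, the qualitative contradiction above already suffices.

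The main obstacle is the mild-solution and continuity justification in the continuous case, specifically ensuring that \eqref{eq:massloss} applies to the given ancient solution. I would handle it via the monotone domination described in the first step.
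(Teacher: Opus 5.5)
Your proposal is correct and is essentially the paper's proof. Lemma~\ref{lem:ancientmono} gives $z(t)\le z(t_0)$ for $t\le t_0$; the mass identity then follows by testing against cutoffs of $\psi$, with the errors controlled by the integrable tails of $\psi z(t_0)$, so your detour through verifying the mild-solution hypothesis of Lemma~\ref{lem:weighted} is unnecessary. Strict mass loss then contradicts time monotonicity. Your ``quantitative'' aside is not valid as stated, because $z(t_1,0)$ may decay as $t_1\to-\infty$ and so $(t_2-t_1)z(t_1,0)$ need not diverge, but you correctly do not rely on it.
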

\begin{proof}
Suppose $M(t_2):=\mass{\psi z(t_2)}<+\infty$ for some $t_2<0$. Lemma~\ref{lem:ancientmono} gives $0\leq z(t,x)\leq z(t_2,x)$ for $t\leq t_2$. In particular, $M(t)$ is finite and nondecreasing on this interval. Test the equation between $t_1<t_2$ against cutoffs of $\psi$. The {adjacent ratio} or logarithmic derivative bounds control the cutoff errors by integrable tails of $\psi z(t_2)$. Passing to the limit gives
\[
 M(t_2)-M(t_1)=-d\int_{t_1}^{t_2}z(t,0)\,dt<0,
\]
a contradiction. 
\end{proof}

\begin{remark}\label{rem:d0}
Fix $g\in\hull$. If a positive exponentially decaying eigenfunction $\phi$ satisfies $\Lop_g\phi=\lambda_1\phi$, the defect is zero and the conclusion of Corollary~\ref{cor:ancient} does not extend to this coefficient: $z(t,x)=\phi(x)$ is a stationary solution. Nevertheless, mass conservation and Lemma~\ref{lem:ancientmono} make every positive finite-$\phi$-mass ancient solution stationary. Its profile is a multiple of $\phi$. Indeed, the Wronskian is constant, and the derivative of the ratio to $\phi$ (or its successive difference) equals this constant divided by $\phi(x)^2$ (or $\phi(n)\phi(n+1)$). Positivity at both infinities and boundedness of $\phi$ force the constant to vanish. A linear solution dominating a front would then have the form $Ce^{\lambda_1t}\phi$, contradicting the front's left limit $1$.

This gives an alternative argument when an edge eigenfunction is assumed. It is distinct from Proposition~\ref{prop:weight}: if an $L^2$ weight satisfied $(\lambda_1-\Lop_g)\psi=d\delta_0$, pairing with $\phi$ would give $d\phi(0)=0$. Such eigenfunctions can therefore occur only outside the regular set used here. We handle those coefficients by translation in Section~\ref{sec:cone}.
\end{remark}

\section{Proof of the upper bound on front speeds}\label{sec:cone}

We first exclude ancient solutions whose right tail remains small ahead of a sufficiently fast line. This will also apply to limits of Cauchy solutions.

\begin{proposition}\label{prop:cone}
Let $g\in\mathcal G$. There is no nonzero bounded ancient solution $0\leq U\leq1$ of \eqref{eq:kpp} for which some $v>\wmax$ satisfies the following property: for every $\delta>0$, there exists $B_\delta$ such that
\begin{equation}\label{eq:cone}
 U(t,x)\leq\delta\qquad\text{if }t\leq0\text{ and }x\geq vt+B_\delta.
\end{equation}
\end{proposition}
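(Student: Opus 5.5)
Argue by contradiction. Assume $U\not\equiv0$ satisfies \eqref{eq:cone} for some $v>\wmax$. The plan is to build from $U$ a strictly positive ancient solution $z$ of \eqref{eq:ancient} with finite $\psi$-weighted mass at some interior time, which Corollary~\ref{cor:ancient} excludes. Fix $\gamma$ with
\[
 \frac{\lambda_1}{v}<\gamma<L(\lambda_1).
\]
This interval is nonempty precisely because $v>\wmax=\lambda_1/L(\lambda_1)$. Let $\psi$ be the weight of Proposition~\ref{prop:weight} for the regular coefficient $g$, so that $\psi(x)\leq C_\gamma e^{-\gamma|x|}$.

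\emph{Step 1: decay of $U$ inside the cone.} Since $vs$ decreases as $s$ decreases, a point with $x\geq vt+B_\delta+R$ stays in the region $\{U\leq\delta\}$ of \eqref{eq:cone}, together with its $R$-neighbourhood, for every earlier time $s\leq t$. It also stays there for later times up to $s\approx(x-B_\delta-R)/v$. Apply Lemma~\ref{lem:growth} forward from $(t,x)$ to time $\min\{0,(x-B_\delta-R)/v\}$ and use $U\leq1$ there. With $b=\lambda_1-\varepsilon-\amax\delta$, this gives
\[
 U(t,x)\leq C\,e^{-\frac bv(x-vt)}\quad (vt\leq x\leq0),\qquad
 U(t,x)\leq C\,e^{bt}\quad (x\geq0).
\]
As in Proposition~\ref{prop:tail}, the constant $C$ absorbs bounded distances.

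\emph{Step 2: a linear ancient majorant.} The function $w(t)=e^{-\lambda_1t}U(t)$ is a subsolution of $z_t=(\Lop_g-\lambda_1)z$, since $g U^2\geq0$. Define
\[
 z(t)=\lim_{s\to-\infty}e^{(t-s)(\Lop_g-\lambda_1)}w(s).
\]
The limit exists because the family is monotone nondecreasing as $s\downarrow-\infty$, by the comparison principle and the positivity of $P_t$. By Lemma~\ref{lem:weighted},
\[
 \norm{z(t)}_\psi\leq\liminf_{s\to-\infty}\norm{w(s)}_\psi .
\]
If this is finite, then $z$ is a mild ancient solution lying in $C(I;X_\psi)$. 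Moreover $z\geq w>0$, since $U\not\equiv0$ and the strong maximum principle make $U>0$. Corollary~\ref{cor:ancient} then gives the contradiction.

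\emph{Step 3: bounding $\norm{w(s)}_\psi$ as $s\to-\infty$.} Split the integral into three regions.

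For $x\leq vs$, use $U\leq1$ and $\psi\leq Ce^{\gamma x}$. The contribution is at most $Ce^{(\gamma v-\lambda_1)s}$, which tends to zero because $\gamma v>\lambda_1$. This is exactly where the hypothesis $v>\wmax$ enters.

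For $x\geq0$ and for $vs\leq x\leq0$, insert the Step~1 bounds. This leaves a factor $e^{-(\lambda_1-b)s}$. It is bounded only if the growth loss $\varepsilon+\amax\delta$ can be removed.

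This loss is the step I expect to be the main obstacle. I would first try to make it subexponential. Deep inside the cone $\delta$ may be taken arbitrarily small. The principal-eigenvalue defect $\varepsilon$ can be taken small by choosing $R$ large in Lemma~\ref{lem:boxes}. Using a slowly growing sequence $\delta_k\to0$, $R_k\to\infty$ on successive time blocks, this should give $w(s,x)\leq e^{o(|s|)}e^{-\kappa(x-vs)}$ for some $\kappa>\gamma$. The $o(|s|)$ loss would then be dominated by choosing the intermediate rates so that a fixed positive exponential margin $(\gamma v-\lambda_1)|s|$ survives in every region.

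If this does not close, the fallback is different. Apply the same construction only on the region $\{x\geq vs+B\}$. Use Lemma~\ref{lem:ancientmono} there: it gives $z(t,0)\leq z(0,0)$ and hence a uniform bound on $w(s,0)$. Then propagate this bound to the $\psi$-mass through the identity \eqref{eq:massloss}.
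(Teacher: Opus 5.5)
\textbf{There is a genuine gap in Step~3, and neither of your proposed fixes closes it.} Your $z$ is the right object: since $e^{(t-s)(\Lop_g-\lambda_1)}w(s)=e^{-\lambda_1t}P_{t-s}U(s)$, it is exactly $e^{-\lambda_1t}V(t)$ in the paper's notation. Steps~1--2 and your treatment of $x\le vs$ are fine.

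\emph{Why Step~3 fails.} Bounding $\norm{z(t)}_\psi$ by $\liminf_{s}\norm{w(s)}_\psi$ requires $\norm{U(s)}_\psi=O(e^{\lambda_1 s})$. That is a \emph{lossless} growth rate near $x=0$, which Lemma~\ref{lem:growth} cannot provide.
\begin{itemize}
\item Making the loss $o(|s|)$ does not help. Even with $b=\lambda_1$ exactly, the region $x\geq0$ contributes $O(1)$ to $\norm{w(s)}_\psi$ with no exponential margin. The margin $(\gamma v-\lambda_1)|s|$ exists only for $x\le vs$. So an $e^{o(|s|)}$ loss may still leave $\norm{w(s)}_\psi$ unbounded.
\item Your intermediate target $w(s,x)\le e^{o(|s|)}e^{-\kappa(x-vs)}$ with $\kappa>\gamma$ is not derivable. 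At $x=vs+B_\delta$ it demands $U\le e^{-\lambda_1|s|+o(|s|)}$. The hypothesis and the growth lemma give no decay in $s$ at bounded distance from the line, and only a spatial rate $b/v<\lambda_1/v<\gamma$ in $x-vs$.
\item The fallback is circular. Lemma~\ref{lem:ancientmono} and \eqref{eq:massloss} apply only once $z$ is known to be a finite ancient solution in $C(I;X_\psi)$ on all of $\Sspace$, which is what must be proved. Also, a construction restricted to $\{x\ge vs+B\}$ is not an ancient solution of the whole-line equation.
\end{itemize}

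\textbf{The missing idea} is to use the variation-of-constants identity instead of the crude bound $\norm{P_{t-s}U(s)}_\psi\le e^{\lambda_1(t-s)}\norm{U(s)}_\psi$. With $G=gU^2$,
\[
 P_{t-q}U(q)=U(t)+\int_q^tP_{t-\sigma}G(\sigma)\,d\sigma,
\]
so
\[
 \norm{P_{t-q}U(q)}_\psi\le\norm{\psi}_{L^1}+\int_q^t e^{\lambda_1(t-\sigma)}\norm{G(\sigma)}_\psi\,d\sigma .
\]
Now only the quadratic remainder has to decay faster than $e^{\lambda_1\sigma}$, and it does despite the loss:
\begin{itemize}
\item Take $\varepsilon,\delta$ small so that $b>\lambda_1/2$, and pick $\lambda_1<\beta<\min\{2b,\gamma v\}$.
\item Apply $\min\{1,y\}^2\le y^{\beta/b}$ to your Step~1 bound $U\le\min\{1,Ce^{bt}(1+e^{-bx/v})\}$. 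This gives $\norm{G(t)}_\psi\le Ce^{\beta t}\mass{\psi(1+e^{-bx/v})^{\beta/b}}$.
\item The integral is finite because $\beta/v<\gamma$.
\end{itemize}
Since $\beta>\lambda_1$, the limit $V$ exists in $X_\psi$. Then $z=e^{-\lambda_1t}V\geq e^{-\lambda_1 t}U>0$ has finite weighted mass, contradicting Corollary~\ref{cor:ancient}. The loss $\varepsilon+\amax\delta$ never has to be removed; you only need $b>\lambda_1/2$.
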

\begin{proof}
The strong comparison principle and \eqref{eq:cone} give $0<U<1$ at interior times. Choose
\[
 \frac{\lambda_1}{v}<\gamma<L(\lambda_1).
\]
Next choose $\varepsilon,\delta>0$ so that $b:=\lambda_1-\varepsilon-\amax\delta>\lambda_1/2$, and fix
\begin{equation}\label{eq:betacone}
 \lambda_1<\beta<\min\{2b,\gamma v\},
 \qquad r=\frac{\beta}{b}\in(1,2).
\end{equation}
Let $R$ be as in Lemma~\ref{lem:growth}, and put $C_0=B_\delta+R$. For $x>vt+C_0$, the interval $[x-R,x+R]$ stays in the region where $U\leq\delta$ until time $t+T$, where
\[
 T=\min\{-t,(x-vt-C_0)/v\}.
\]
If $T\geq2$, the local exponential growth estimate of Lemma~\ref{lem:growth} implies $U(t,x)\leq Ce^{-bT}$. Increasing $C$ to include $T<2$ and points behind the line gives
\begin{equation}\label{eq:conetail}
 U(t,x)\leq\min\{1,Ce^{bt}(1+e^{-bx/v})\},\qquad t\leq0.
\end{equation}
For $G(t,x)=g(x)U(t,x)^2$, the inequality $\min\{1,y\}^2\leq y^r$ yields
\begin{equation}\label{eq:remainder}
 \norm{G(t)}_\psi
 \leq Ce^{\beta t}\mass{\psi(x)(1+e^{-bx/v})^r}
 \leq C'e^{\beta t}.
\end{equation}
The spatial integral is finite because $\beta/v<\gamma$. Since $\beta>\lambda_1$,
\[
 \int_{-\infty}^0 e^{-\lambda_1t}\norm{G(t)}_\psi\,dt<+\infty.
\]

For $q<t\leq0$, the variation-of-constants formula gives
\[
 V_q(t):=U(t)+\int_q^tP_{t-r}G(r)\,dr=P_{t-q}U(q).
\]
By \eqref{eq:semigroup} and \eqref{eq:remainder}, $V_q$ converges in $X_\psi$, uniformly on compact time intervals, as $q\to-\infty$. Its limit is
\begin{equation}\label{eq:lift}
 V(t)=U(t)+\int_{-\infty}^tP_{t-r}G(r)\,dr\geq U(t)>0,
\end{equation}
and $V(t)=P_{t-s}V(s)$ for $s<t$. On $\Z$, the generator is bounded on $X_\psi$; on $\R$, local kernel smoothing gives a classical representative. Thus $V_t=\Lop_gV$, and $z(t)=e^{-\lambda_1t}V(t)$ is a positive ancient solution of \eqref{eq:ancient} with finite weighted mass. This contradicts Corollary~\ref{cor:ancient}.
\end{proof}

\begin{lemma}\label{lem:phasetransfer}
If \eqref{eq:kpp} has a transition front with global mean speed $w$ for one $g\in\hull$, then it has a transition front with the same global mean speed for every $\widetilde g\in\hull$.
\end{lemma}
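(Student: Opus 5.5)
The argument is a compactness and translation argument in the hull. Let $u$ be a transition front for $g$ with interface $X$ and positive global mean speed $w$, and fix $\widetilde g\in\hull$. By minimality there are $y_k\in\Sspace$ with $\tau_{y_k}g\to\widetilde g$ uniformly. In the lattice case the $y_k$ are integers. Since $w>0$, Lemma~\ref{lem:displacement} shows that $X$ is coarsely increasing and unbounded in both directions. Hence for each $k$ we can pick a time $t_k$ with $|X(t_k)-y_k|\leq C$, where $C$ does not depend on $k$. This uses the first assertion of Lemma~\ref{lem:displacement}: the jumps of $X$ over unit time intervals are bounded, so the intermediate-value step costs only a bounded error. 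In the lattice case we may take $X$ integer valued.

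We then set
\[
 u_k(t,x):=u(t+t_k,x+y_k),
\]
which is an entire solution of \eqref{eq:kpp} with coefficient $\tau_{y_k}g$. By parabolic interior estimates on $\R$ (the coefficients are uniformly $C^{0,\alpha}$), or by the equation together with neighbor bounds on $\Z$, the family $u_k$ is locally equicontinuous in space-time. A diagonal subsequence converges locally uniformly to an entire solution $\widetilde u$ with coefficient $\widetilde g$, and $0\leq\widetilde u\leq1$.

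As interface location function we take
\[
 \widetilde X(t):=\liminf_k\bigl(X(t+t_k)-y_k\bigr),
\]
which is finite by \eqref{eq:displacement}. A more robust choice is a subsequential limit along a countable dense set of times, extended using the bounded-oscillation estimate of Lemma~\ref{lem:displacement}. The limits in \eqref{eq:uniformends} are uniform in $t$ for $u$, so they pass to $\widetilde u$. The estimate \eqref{eq:displacement} holds uniformly in $k$ for $X_k(t):=X(t+t_k)-y_k$, and it also passes to $\widetilde X$, up to the bounded ambiguity. Therefore $|\widetilde X(t+T)-\widetilde X(t)-wT|\leq\varepsilon T+C_\varepsilon$ for all $t$, which gives global mean speed $w$. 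The uniform limits \eqref{eq:uniformends} also exclude the degenerate limits $\widetilde u\equiv0$ and $\widetilde u\equiv1$. By the strong maximum principle (or positivity propagation on $\Z$) we get $0<\widetilde u<1$, so $\widetilde u$ is a transition front for $\widetilde g$ with mean speed $w$.

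The step I expect to be the main obstacle is choosing $\widetilde X$ so that \eqref{eq:uniformends} holds exactly for the limit, since $X$ itself need not be continuous or converge. The fix I would try first is to replace $X$ at the outset by a canonical interface location: for instance $X(t)=\sup\{x:u(t,x)\geq1/2\}$, or $x^+_{1/2}(t)$ as in \eqref{eq:levelends}. This differs from any admissible interface location by a bounded amount. The argument then only needs two-sided bounds $|\widetilde X(t)-\limsup_k X_k(t)|\leq C$, and these follow from \eqref{eq:uniformends} applied to $u_k$ with constants uniform in $k$.
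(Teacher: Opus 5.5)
Your proposal is correct and follows essentially the same route as the paper. Both arguments translate along $y_k$ with matched times $t_k$, extract a limit by local compactness, and pass the uniform end limits and the displacement estimate~\eqref{eq:displacement} to a limiting interface; the paper chooses $t_j$ by $u(t_j,y_j)=1/2$ and takes $\widetilde X(t)=\xi(\lfloor t\rfloor)$ from a diagonal limit at integer times, where you use a coarse intermediate-value choice and a pointwise $\liminf$. Your first choice of $\widetilde X$ already works once the $\liminf$ is taken along the subsequence on which $u_k\to\widetilde u$: at each $t$ it is a cluster point of $X_k(t)$, which suffices for \eqref{eq:uniformends}, and the two-sided inequalities in \eqref{eq:displacement}, holding for every $k$, pass to the $\liminf$, so the obstacle you anticipate does not arise.
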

\begin{proof}
Choose $y_j\in\Sspace$ so that $\tau_{y_j}g\to\widetilde g$ uniformly. The mean-speed condition gives $X(t)\to\pm\infty$ as $t\to\pm\infty$. For every $y_j$, the function $u(t,y_j)$ therefore has temporal limits $0$ and $1$. Choose $t_j$ with $u(t_j,y_j)=1/2$. The uniform convergence to the {steady states} in \eqref{eq:uniformends} implies $|X(t_j)-y_j|\leq C$.

Local estimates give a subsequence of $u(t+t_j,x+y_j)$ converging to a solution at $\widetilde g$, with value $1/2$ at $(0,0)$. The strong comparison principle gives values strictly between zero and one. By Lemma~\ref{lem:displacement}, a further diagonal subsequence satisfies
\[
 X(t_j+k)-y_j\longrightarrow\xi(k),\qquad k\in\Z.
\]
Set $\widetilde X(t)=\xi(\lfloor t\rfloor)$. The bounded-time displacement estimate ensures that the limiting solution converges to the {steady states} uniformly in time relative to this interface, as in \eqref{eq:uniformends}. Moreover, for integers $k\leq l$,
\[
 |\xi(l)-\xi(k)-w(l-k)|\leq\varepsilon(l-k)+C_\varepsilon.
\]
Replacing arbitrary times by their integer parts contributes only a bounded error. Dividing by the elapsed time proves the global mean speed $w$.
\end{proof}

\begin{proof}[Proof of Theorem~\ref{thm:front}]
Suppose $w>\wmax$. By Lemma~\ref{lem:phasetransfer}, it suffices to work with $g\in\mathcal G$. Choose $\wmax<v<w$. The lower bound in \eqref{eq:displacement}, applied between $t\leq0$ and zero, gives $X(t)\leq vt+C$. The uniform convergence to zero ahead of the interface in \eqref{eq:uniformends} then implies \eqref{eq:cone}, contradicting Proposition~\ref{prop:cone}. Estimate~\eqref{eq:fronttail} follows from Proposition~\ref{prop:tail}. Taking its logarithm at fixed time and letting $q\uparrow\lambda_1/w$ completes the proof.
\end{proof}

\section{Proof of the unbounded-width result}\label{sec:cauchy}

We argue by contradiction. A bounded transition band gives a continuous interface and uniform right tails. We then select first arrival times of this interface at prescribed spatial positions, retaining a uniform bound on its past positions and obtaining a regular limiting coefficient. The resulting limit satisfies the hypotheses of Proposition~\ref{prop:cone}.

\begin{lemma}\label{lem:band}
Suppose $W_s(t)\leq K$ for $t\geq T_0$ and some $s\in(0,1/2)$. Then:
\begin{enumerate}
\item for every $S>0$, there exists $C_S$ such that
\[
 |\rs(t+r)-\rs(t)|\leq C_S,
 \qquad t\geq T_0,\quad 0\leq r\leq S;
\]
\item for every $\delta>0$, there exists $K_\delta$ such that
\[
 u(t,x)\leq\delta\quad\text{for }t\geq T_0+1,\quad x\geq\rs(t)+K_\delta;
\]
\item there is a continuous function $X(t)$ at bounded distance from $\rs(t)$.
\end{enumerate}
\end{lemma}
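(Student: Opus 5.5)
The plan is to treat the band as an effective interface and to use the same heat-kernel comparison as in Lemma~\ref{lem:displacement}. We proceed in the order (2), (1), (3).

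\emph{Bounded width and the geometry of the band.} For $t\geq T_0$ the definitions \eqref{eq:levelends} give
\[
u(t,x)\geq 1-s \quad\text{for } x<\ls(t), \qquad u(t,x)<s \quad\text{for } x>\rs(t),
\]
and $\rs(t)-K\leq \ls(t)$.

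\emph{Item (2): uniform smallness ahead of $\rs$.} We compare $u$ at time $t\geq T_0+1$ with data at time $t-\sigma$, for a small fixed $\sigma\in(0,1]$. Upper comparison with $e^{\amax\sigma}$ times the free heat kernel (Gaussian on $\R$, $P_\sigma$ on $\Z$) splits $u(t,x)$ into two parts.
\begin{itemize}
\item Contributions from sites $m>\rs(t-\sigma)$, where $u<s$, are at most $e^{\amax\sigma}s$.
\item Contributions from $m\leq \rs(t-\sigma)$ are small when $x-\rs(t-\sigma)$ is large, by the kernel tails.
\end{itemize}
This alone gives only $u\lesssim s<1/2$, not $\delta$, so we bootstrap with the KPP structure instead of the kernel. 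The value $s<1/2$ is irrelevant here; what matters is a \emph{positive} lower threshold.

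Far ahead of $\rs(t-\sigma)$ we know $u<s$ on a whole half-line over a bounded time window. The linear growth rate is at most $\amax$, while diffusion moves mass from the unit-size region behind only with Gaussian tails. Iterating the kernel estimate over times $t-k$, $k=1,\dots,N$, we use that $u<s$ on $\{x>\rs(t-k)\}$ for all these times. This requires first knowing (1) on bounded windows, so we prove (1) before completing (2).

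A cleaner route: $u(t,\cdot)$ is bounded above by $e^{\amax}$ times the heat flow, over one unit of time, of the datum $\mathbf 1_{\{x\leq \rs(t-1)\}}+s\mathbf 1_{\{x>\rs(t-1)\}}$. This yields only $s e^{\amax}$, which again is not small. Hence we use an argument by contradiction and compactness. Suppose there are $\delta>0$, $t_j$ and $x_j-\rs(t_j)\to\infty$ with $u(t_j,x_j)>\delta$. Translate to $(t_j,x_j)$ and pass to a limit along the hull using local estimates. The limit $\tilde u$ is an entire-in-the-past solution on $(-t_j+T_0,0]$, a range that becomes all of $(-\infty,0]$. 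Provided (1) holds, $\tilde u<s$ on half-lines whose left endpoints tend to $-\infty$ on every bounded past window. So $\tilde u\leq s$ everywhere for $t\leq0$. This is possible for an ancient solution, since it could be increasing from $0$, so the compactness alone does not close.

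The real input must therefore be the level $1-s$ sitting within distance $K$. We use the Harnack/local growth Lemma~\ref{lem:growth} in reverse. If $u$ is small ($<s$) on long intervals for long times, it grows at rate $\lambda_1-\varepsilon-\amax s>0$, assuming $s$ is small. Without that assumption, we instead use the fact that $\tilde u\leq s<1$ and positivity force exponential growth backward bounds: $\tilde u(t)\leq Ce^{bt}$. Letting $t\to-\infty$, this contradicts the uniform lower bound $\tilde u(0,0)\geq\delta$ only after a further argument. I expect this step to be \textbf{the main obstacle}. I would first try to establish it by showing $\tilde u\to0$ uniformly, via the fact that the only nonnegative ancient solutions bounded by $s<1/2$ in all of space-time of KPP are trivial. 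That is a Liouville property, obtainable by comparing with the spatially uniform ODE solution $\dot y=\amin y(1-y)$ started from $\sup\tilde u(t_0)$.

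\emph{Item (1): bounded displacement over windows of length $S$.} We copy the proof of Lemma~\ref{lem:displacement} with $X=\rs$, using $u\geq1-s$ behind $\rs-K$ and $u<s$ ahead of $\rs$. The lower comparison (free kernel without reaction) keeps $u>s$ at points a bounded distance ahead of $\rs(t)-K$ after time $r\leq S$. This bounds $\rs(t+r)-\rs(t)$ from below. The upper comparison, using item (2) at time $t$, bounds it from above.

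\emph{Item (3): a continuous interface.} We take $X$ to be the piecewise linear interpolation of $\rs(T_0+k)$, $k\in\N$ (or a continuous extension before $T_0$). By (1) with $S=1$, we get $|X(t)-\rs(t)|\leq 2C_1$.
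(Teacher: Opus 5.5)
\textbf{There is a genuine gap.} Item (2) is never proved, and your argument for (1) is circular as written.

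You say (2) needs (1) on bounded windows, but your upper bound in (1) is obtained ``using item (2) at time $t$''. Without (2), the upper heat comparison only gives $u(t+r,x)\le e^{\amax r}(s+\text{kernel tail})$ ahead of $\rs(t)$. This exceeds $s$, so it says nothing about $\rs(t+r)$. The missing idea is to pass through the other level and use the width bound:
\begin{itemize}
\item pick $r_0>0$ with $e^{\amax r_0}s<1-s$;
\item for $0\le r\le r_0$ the same comparison gives $u(t+r,x)<1-s$ for $x\ge\rs(t)+R$;
\item hence $\ls(t+r)\le\rs(t)+R$, and so $\rs(t+r)\le\ls(t+r)+K\le\rs(t)+R+K$.
\end{itemize}
Iterating in steps of length $r_0$ gives the upper half of (1) without any reference to (2). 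Your lower half is fine. One correction: the reaction-free comparison gives $u(t+r,\cdot)>s$ at points a bounded, $S$-dependent distance \emph{behind} $\ls(t)\ge\rs(t)-K$, not ahead of it.

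Item (2) is the step you leave open, and the proposed Liouville mechanism does not work. A spatially uniform solution of $\dot y=\amin y(1-y)$ is a subsolution, since $g\ge\amin$. Started from $\sup\tilde u(t_0)$ it lies above $\tilde u$, so comparison yields nothing. More generally, a spatially uniform comparison from above gives only upper bounds, and one from below starts at $\inf\tilde u$, which may be $0$.

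What is needed is a localized, uniform hair-trigger estimate. For every $\delta>0$ there is $T_\delta$, independent of $t\ge T_0+1$ and $x$, such that $u(t,x)\ge\delta$ implies $u(t+T_\delta,x)\ge s$. It is proved in two steps:
\begin{itemize}
\item After one unit of time, spread the value $\delta$ to a lower bound $\eta_{\delta,R}>0$ on the interval of radius $R$ around $x$. Use heat-kernel positivity on $\Z$, and the uniform gradient bound plus heat comparison on $\R$.
\item Then compare with $q(\tau)\varphi_R(\cdot-x)$. Here $\varphi_R$ is the Dirichlet principal eigenfunction of $\D+\amin$ on a large interval, with $\max\varphi_R=\varphi_R(0)=1$ and eigenvalue $\lambda_R>\amin s$. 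The amplitude solves $q'=\lambda_Rq-\amin q^2$ with $q(0)=\min\{\eta_{\delta,R},s/2\}$, so it reaches $s$ in uniformly bounded time.
\end{itemize}
Combined with (1), this gives $x\le\rs(t+T_\delta)\le\rs(t)+C_{T_\delta}$ whenever $u(t,x)\ge\delta$, which is (2) directly. No compactness or Liouville theorem is needed.

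Your compactness route could be closed once (1) is known independently, because the limit then stays below $s$ at later times too. But closing it requires exactly this estimate, which makes the compactness step superfluous.

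Your item (3) is correct given (1): the piecewise linear interpolation of $\rs(T_0+k)$ satisfies $|X-\rs|\le 2C_1$. It is a simpler alternative to the paper's integral functional $X(t)=\int_{\Sspace}(\chi(u(t,x))-\mathbf 1_{\{x<0\}})\,dm$.
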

\begin{proof}
Increase $K$ by a lattice rounding constant if necessary. Choose $r_0>0$ with $e^{\amax r_0}s<1-s$. Heat comparison, as in Lemma~\ref{lem:displacement}, gives $R>0$ such that, uniformly for $0\leq r\leq r_0$,
\[
 \begin{aligned}
 u(t+r,x)&<1-s&&\text{for }x\geq\rs(t)+R,\\
 u(t+r,x)&>s&&\text{for }x\leq\ls(t)-R.
 \end{aligned}
\]
The first inequality bounds $\ls(t+r)$ from above and hence, by bounded width, bounds $\rs(t+r)$. The second gives a lower bound for $\rs(t+r)$. Iteration proves (1).

For (2), we first claim that, for each $\delta>0$, there is a uniform time $T_\delta$ such that
\begin{equation}\label{eq:pointamplify}
 u(t,x)\geq\delta\quad\Longrightarrow\quad
 u(t+T_\delta,x)\geq s,
 \qquad t\geq T_0+1.
\end{equation}
{We first show that, for every fixed $R>0$, there exists
	$\eta_{\delta,R}>0$, independent of $t$, $x$, and $g$, such that
	\[
	u(t,x)\geq\delta
	\quad\Longrightarrow\quad
	u(t+1,z)\geq\eta_{\delta,R}
	\qquad\text{for }|z-x|\leq R.
	\]
	On $\Z$, this follows by heat comparison from the strict
	positivity of the discrete heat kernel at time one.
	On $\R$, the uniform positive-time gradient bound gives
	$u(t,z)\geq\delta/2$ for $|z-x|\leq\rho_\delta$,
	where $\rho_\delta>0$ is uniform. Heat comparison with this
	bump then yields the same conclusion.}
 Choose a large Dirichlet interval for the homogeneous operator $\D+\amin$, with principal eigenvalue $\lambda_R$ satisfying $\lambda_R/\amin>s$. Normalize its eigenfunction $\varphi_R$ to have maximum one at the center. 
{Set $q_0=\min\{\eta_{\delta,R},s/2\}>0$. Then
	\[
	u(t+1,z)\geq q_0\varphi_R(z-x)
	\qquad\text{on the interval centered at }x.
	\]
	Starting at time $t+1$, compare with the subsolution
	$q(\tau)\varphi_R(z-x)$, where
	\[
	q'=\lambda_Rq-\amin q^2,\qquad q(0)=q_0.
	\]
	Since $q_0<s<\lambda_R/\amin$, $q$ reaches
	$s$ after a uniformly bounded additional time.
	At the center, $\varphi_R(0)=1$, so this proves
	\eqref{eq:pointamplify}.} 
 Combining it with (1) shows that a point with $u(t,x)\geq\delta$ cannot lie to the right of $\rs(t)+C_{T_\delta}+1$. Thus (2) follows.

Finally, let $\chi$ be smooth and nondecreasing, equal to zero on $[0,s]$ and one on $[1-s,1]$. Define
\begin{equation}\label{eq:cutoffinterface}
 X(t)=\mass{\chi(u(t,x))-\mathbf1_{\{x<0\}}}.
\end{equation}
The integrand is compactly supported at each time. Comparison with step functions at $\ls(t)$ and $\rs(t)$ gives $|X(t)-\rs(t)|\leq K+C$. By (1), the supports lie in a common compact interval when $t$ ranges over a compact interval. Dominated convergence gives continuity.
\end{proof}

We first choose a spatial step for which averages along the
discrete sequence of coefficient translates converge to the
hull average.

\begin{lemma}\label{lem:mesh}
There exists $h>0$ such that $\tau_h$ is uniquely ergodic on $\hull$, with invariant measure $\mu_H$.
\end{lemma}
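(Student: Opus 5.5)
In the lattice case there is nothing to prove: take $h=1$. The translation action of $\Z$ on $\hull$ is generated by $\tau_1$, and $\mu_H$ is its unique invariant measure. So the content is the continuous case $\Sspace=\R$. There the flow $(\tau_y)_{y\in\R}$ on the compact metric space $\hull$ is minimal and uniquely ergodic, and we must find a time-$h$ map that stays uniquely ergodic.

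The plan is to use the structure of the hull. Since $a$ is Bohr almost periodic, $\hull$ is a compact monothetic group. More precisely, the map $y\mapsto\tau_y a$ extends to a continuous homomorphism $\iota:\R\to\hull$ with dense image, where the group law on $\hull$ is the completion of $\tau_y a\cdot\tau_z a=\tau_{y+z}a$. The flow is then $\tau_y g=g+\iota(y)$, and $\mu_H$ is Haar measure.

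For a translation $g\mapsto g+\iota(h)$ on a compact abelian group, three properties are equivalent:
\begin{itemize}
\item unique ergodicity;
\item minimality;
\item no nontrivial character $\chi\in\widehat{\hull}$ satisfies $\chi(\iota(h))=1$.
\end{itemize}
This is Weyl's criterion. Indeed, the Fourier coefficients of an invariant measure must vanish at every $\chi$ with $\chi(\iota(h))\neq1$, so the measure is Haar. The characters of $\hull$ restrict along $\iota$ to $y\mapsto e^{i\omega y}$, with $\omega$ ranging over the frequency module $\mathcal M(a)$. This module is the countable additive group generated by the Fourier--Bohr exponents of $a$.

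The condition therefore reads: $e^{i\omega h}\neq1$ for every $\omega\in\mathcal M(a)\setminus\{0\}$. Equivalently, $h\notin\bigcup_{\omega\neq0}(2\pi/\omega)\Z$. This excludes only a countable set of $h$, so almost every $h>0$ works.

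The main point to check carefully is the identification of characters of $\hull$ with the frequency module. I would prove it directly rather than cite it. Given a continuous character $\chi$ of $\hull$, the function $y\mapsto\chi(\tau_y a)$ is a continuous character of $\R$, hence equal to $e^{i\omega y}$. The frequency $\omega$ lies in $\mathcal M(a)$ by standard Bohr theory: continuous functions on the hull restrict to almost periodic functions whose frequencies lie in $\mathcal M(a)$, by uniform approximation with trigonometric polynomials in translates of $a$.

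A more elementary alternative avoids the group structure. Any $\tau_h$-invariant probability measure $\nu$ averages to a flow-invariant measure $\int_0^h(\tau_y)_*\nu\,dy/h=\mu_H$. The ergodic decomposition for $\tau_h$ then shows that non-uniqueness forces an eigenfunction of the flow with eigenvalue in $(2\pi/h)\Z\setminus\{0\}$. Such eigenvalues belong to the countable discrete spectrum of the flow. So one again needs to avoid only countably many $h$.
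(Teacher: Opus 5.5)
Your proof is correct. For $\Sspace=\R$ it takes a different route from the paper's.

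The paper only invokes a general fact (Fisher--Hasselblatt, Theorem~3.3.34). For any continuous uniquely ergodic flow on a compact metric space, the time-$h$ map is uniquely ergodic for all but countably many $h$. The unique $\tau_h$-invariant measure is then $\mu_H$, because $\mu_H$ is invariant under every translation.

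Your main argument uses structure special to Bohr almost periodic coefficients:
\begin{itemize}
\item $\hull$ is a compact abelian group containing the dense one-parameter subgroup $\iota(\R)$.
\item $\tau_h$ is rotation by $\iota(h)$.
\item The Fourier-coefficient argument makes $\tau_h$ uniquely ergodic, and even minimal, once $\chi(\iota(h))\neq1$ for every nontrivial character $\chi$.
\end{itemize}
This gives an explicit exceptional set, namely the $h$ with $\omega h\in2\pi\Z$ for some nonzero frequency, and it gives minimality of $\tau_h$. The cost is the hull-group and character machinery. The paper's citation uses nothing beyond unique ergodicity of the flow.

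The step you singled out can be shortened: you do not need $\omega\in\mathcal M(a)$. The map $\chi\mapsto\omega$ is injective, by density of $\iota(\R)$ and continuity of $\chi$. Moreover, $\widehat{\hull}$ is countable, because distinct characters are orthonormal in the separable space $L^2(\hull,\mu_H)$.

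Your second, ``elementary'' argument is essentially the proof of the theorem the paper cites. The step you compress runs in two directions:
\begin{itemize}
\item If $\mu_H$ is $\tau_h$-ergodic, it is extremal among $\tau_h$-invariant measures. Then $\mu_H=h^{-1}\int_0^h(\tau_y)_*\nu\,dy$ forces $(\tau_y)_*\nu=\mu_H$ for a.e.\ $y$, and hence $\nu=\mu_H$.
\item If $\mu_H$ is not $\tau_h$-ergodic, there is a nonconstant $\tau_h$-invariant function in $L^2(\mu_H)$. The spectral theorem for the unitary flow then yields an eigenvalue in $(2\pi/h)\Z\setminus\{0\}$.
\end{itemize}
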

\begin{proof}
If $\Sspace=\Z$, just take $h=1$.

If $\Sspace=\R$, the maps $(\tau_y)_{y\in\R}$ form
a continuous uniquely ergodic flow on the compact
metrizable space $\hull$.
By \citep[Theorem~3.3.34]{FisherHasselblatt19},
$\tau_h$ is uniquely ergodic for all but countably
many $h\in\R$.
Choose such an $h>0$.
Its unique invariant probability measure is $\mu_H$,
because $\mu_H$ is invariant under every spatial
translation.
\end{proof}

\begin{lemma}\label{lem:records}
Assume the conclusions of Lemma~\ref{lem:band}, and let
$\limsup_{t\to+\infty}X(t)/t>v>0$. Then there exist $t_j\to+\infty$ and $y_j\in\Sspace$ such that $X(t_j)=y_j$, $\tau_{y_j}g\to g_*\in\mathcal G$, and, for a fixed $T_1$,
\begin{equation}\label{eq:recordcone}
 X(t)-y_j\leq v(t-t_j)+h,
 \qquad T_1\leq t\leq t_j,
\end{equation}
where $h$ is given by Lemma~\ref{lem:mesh}.
\end{lemma}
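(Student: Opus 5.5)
We construct the times $t_j$ as first arrival times of the continuous interface $X$ at the mesh points $kh$, and then choose among these mesh points using unique ergodicity of $\tau_h$ from Lemma~\ref{lem:mesh}. Since $X$ is continuous and $\limsup X(t)/t>v$, $X$ is unbounded above. For every sufficiently large integer $k$ set
\[
 s_k=\inf\{t\geq T_1: X(t)\geq kh\},
\]
where $T_1$ is fixed (say $T_1=T_0+1$) and $k$ is large enough that $kh>X(T_1)$. By continuity $X(s_k)=kh$, $X(t)<kh$ on $[T_1,s_k)$, and $s_k$ is nondecreasing in $k$ with $s_k\to+\infty$. In the lattice case $h=1$, and with $X$ integer-valued or rounded this is consistent; otherwise we replace $y_j=kh$ by the nearest lattice point.

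The first step is to isolate the ``record'' indices $k$ for which the cone estimate \eqref{eq:recordcone} holds. Call $k$ good if $X(t)\leq kh+v(t-s_k)+h$ for all $t\in[T_1,s_k]$. The plan is to show that good indices have positive lower density among $k\in\{1,\dots,N\}$, at least along a sequence $N\to\infty$. This is a rising-sun/record argument applied to the nondecreasing function $k\mapsto s_k$ together with the line of slope $v$.

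To carry this out, set $\phi(k)=kh-v s_k$. The inequality $X(t)<(k'+1)h$ holds whenever $s_{k'}\leq t<s_{k'+1}$. Hence $k$ is good as soon as $\phi(k)\geq\phi(k')$ for all $k'\leq k$ with $s_{k'}\geq T_1$; the extra $h$ absorbs the gap between consecutive mesh levels. Since $\limsup X(t)/t>v'>v$, there are infinitely many $N$ with $s_N\leq Nh/v'$. For such $N$ we get $\phi(N)\geq Nh(1-v/v')$, so $\phi$ grows linearly along this subsequence. The indices that are running maxima of $\phi$ then occupy a positive fraction of $\{1,\dots,N\}$. Indeed, each non-record index lies in a stretch of $k$'s over which $\phi$ fails to exceed its previous maximum. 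Over such a stretch $\phi$ increases by at most $h$ per step, so the cumulative increase by the amount $c N$ forces at least $cN/h$ record indices.

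The final step is unique ergodicity. Since $\mathcal G$ has full $\mu_H$ measure, its closure contains $\supp\mu_H=\hull$. The regular points, however, need not be hit by the orbit $\tau_{kh}g$, which is what makes this step delicate. The plan is to pass to limits instead. Fix a compact set $F\subset\mathcal G$ with $\mu_H(F)>1-c/(2h)$, which exists by regularity of $\mu_H$, and a shrinking open neighborhood $U_m\supset F$. By unique ergodicity of $\tau_h$, the frequency of $k\leq N$ with $\tau_{kh}g\in \overline{U_m}$ tends to at least $\mu_H(U_m)\geq \mu_H(F)$, uniformly in the starting point. For large $N$ in our subsequence, the good indices and those landing in $U_m$ must therefore intersect. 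Choose such a $k_m$ with $k_m\to\infty$, set $t_m=s_{k_m}$ and $y_m=k_mh$, and take a diagonal subsequence along which $\tau_{y_m}g$ converges. The limit lies in $\bigcap_m\overline{U_m}=F\subset\mathcal G$. The main obstacle is making the density bookkeeping in the record argument precise and uniform enough to compare it with the ergodic frequencies. If it fails, I would try first to strengthen the record count to density $1-\epsilon$ by taking $v'$ close to $\limsup X/t$.
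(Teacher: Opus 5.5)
Your argument is correct, and its first two stages coincide with the paper's. Both use the first arrival times $s_k$ at the mesh points $kh$ and the record indices of $\phi(k)=kh-vs_k$, which is $v$ times the paper's $D_k$. Both also use the same count: records have positive upper density because the running maximum grows linearly along a subsequence while each increase is at most $h$.

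The two routes differ only in how the regular limit coefficient is extracted.
\begin{itemize}
\item The paper takes a weak limit $\nu_{\mathrm{rec}}$ of the empirical measures of record coefficients. Unique ergodicity of $\tau_h$ gives $0\neq\nu_{\mathrm{rec}}\leq\mu_H$, and the paper then picks $g_*\in\mathcal G\cap\supp\nu_{\mathrm{rec}}$.
\item You instead use inner regularity to choose a compact $F\subset\mathcal G$ with $\mu_H(\hull\setminus F)$ below the record density. The lower bound on visit frequencies to the open sets $U_m=\{\operatorname{dist}(\cdot,F)<1/m\}$ then forces, by counting, arbitrarily late records in each $U_m$. A diagonal limit lands in $\bigcap_m\overline{U_m}=F$.
\end{itemize}
Both versions need only positive record density and measurability of $\mathcal G$. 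The paper's version avoids choosing $F$ quantitatively. Yours avoids weak limits of sub-probability measures and makes the selection a finitary counting argument.

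One small correction. With $T_1=T_0+1$, the initial interval $[T_1,s_{k_0})$, before the first mesh arrival, is not covered by your intervals $[s_{k'},s_{k'+1})$. So goodness of $k$ does not follow there from $\phi(k)\geq\phi(k')$ alone. There are two easy fixes:
\begin{itemize}
\item take $T_1=s_{k_0}$, as the paper does with $T_1=t_{n_0}$; or
\item note that $\phi(k)\to+\infty$ along records, so $X(t)\leq kh+v(t-s_k)+h$ holds on that fixed compact interval for all sufficiently large record indices.
\end{itemize}
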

\begin{proof}
Choose $w_1$ with $v<w_1<\limsup X(t)/t$, and an integer $n_0$ such that $n_0h>X(T_0+1)$. Let $t_n$ be the first time at which the interface reaches the spatial position $nh$, and define $D_n$ by
\[
 t_n=\inf\{t\geq T_0+1:X(t)\geq nh\},
 \qquad D_n=\frac{nh}{v}-t_n,
 \qquad n\geq n_0.
\]
By continuity, $X(t_n)=nh$; the times $t_n$ are nondecreasing and tend to infinity. The quantity $D_n$ is the advance relative to the reference arrival time $nh/v$. Choose $T_j\to+\infty$ with $X(T_j)\geq w_1T_j$, and put $N_j=\lfloor X(T_j)/h\rfloor$. Since $t_{N_j}\leq T_j$,
\[
 \limsup_{n\to+\infty}\frac{D_n}{n}
 \geq h\left(\frac1v-\frac1{w_1}\right)>0.
\]
Also $D_n-D_{n-1}\leq h/v$. Let $\mathcal R$ consist of $n_0$ and the strict record indices of $D_n$, namely the indices $n>n_0$ for which
\[
 D_n>\max_{n_0\leq m<n}D_m.
\]
Each increase of the running maximum is at most $h/v$, while that maximum grows linearly along a subsequence. Therefore $\mathcal R$ has positive upper density.

If $n\in\mathcal R$ and $n_0\leq m\leq n$, then $t_n-t_m\leq(n-m)h/v$. For $t_{n_0}\leq t\leq t_n$, put $m=\lfloor X(t)/h\rfloor$. Since $t_n$ is the first arrival time at $nh$, we have $m\leq n$. If $m\geq n_0$, the definition of $t_m$ gives $t_m\leq t$, and hence
\[
 X(t)<mh+h\leq nh+v(t-t_n)+h.
\]
If $X(t)<n_0h$, the same estimate follows from $D_n\geq D_{n_0}$ and $t\geq t_{n_0}$. Thus \eqref{eq:recordcone} holds at every record, with $T_1=t_{n_0}$.

It remains to select a regular limiting coefficient without losing
\eqref{eq:recordcone}. Along a subsequence with positive record
density, the finite measures on $\hull$
\[
 \nu_N^{\mathrm{rec}}=\frac1N\sum_{\substack{n_0\leq n\leq N\\n\in\mathcal R}}
 \delta_{\tau_{nh}g}
\]
have a nonzero weak limit $\nu_{\mathrm{rec}}$. Here each point mass is located at a coefficient $\tau_{nh}g\in\hull$; these measures are distinct from the density-of-states measure $\nu$ on $\R$ introduced in Section~\ref{sec:setting}. Testing against nonnegative continuous functions and comparing with the full orbit sum, unique ergodicity gives $\nu_{\mathrm{rec}}\leq\mu_H$. Consequently $\mathcal G$ has full $\nu_{\mathrm{rec}}$-measure. Choose $g_*\in\mathcal G\cap\supp\nu_{\mathrm{rec}}$. Every neighborhood of $g_*$ contains arbitrarily late record coefficients. We can therefore choose record indices $n_j\to+\infty$ with $\tau_{n_jh}g\to g_*$. Taking $y_j=n_jh$ and $t_j=t_{n_j}$ completes the proof. 
\end{proof}

\begin{proof}[Proof of Theorem~\ref{thm:width}]
Suppose \eqref{eq:unboundedwidth} fails. Then $W_s$ is eventually bounded, and Lemma~\ref{lem:band} gives a continuous $X$ at bounded distance from $\rs$. Choose
$\wmax<v<\limsup X(t)/t$ and apply Lemma~\ref{lem:records}. Since $t_j\to+\infty$, the translated solutions are defined on every fixed compact time interval for all sufficiently large $j$. Local compactness gives a subsequence of
\[
 U_j(t,x)=u(t_j+t,y_j+x)
\]
converging to an entire solution $0\leq U\leq1$ at $g_*\in\mathcal G$. A point where $u(t_j,\cdot)\geq s$ stays at bounded distance from $y_j$, so $U\not\equiv0$; on $\R$ use uniform positive-time spatial continuity. The right-tail estimate in Lemma~\ref{lem:band} and \eqref{eq:recordcone} imply
\[
 U(t,x)\leq\delta\qquad\text{for }t\leq0,\quad x\geq vt+B_\delta,
\]
with $B_\delta$ independent of $t$. This contradicts Proposition~\ref{prop:cone}.
\end{proof}

\begin{proof}[Proof of Corollary~\ref{cor:slow}]
We prove a lower spreading estimate directly. Fix $0<v<\lambda_1/p$ and choose $\varepsilon>0$ with $pv<\lambda_1-\varepsilon$. Lemma~\ref{lem:boxes} gives a fixed $R$ such that the principal eigenvalue $\lambda_{y,R}$ on every interval centered at $y$ belongs to $[\lambda_1-\varepsilon,\lambda_1]$. Normalize its positive eigenfunction $\varphi_{y,R}$ by its maximum. Compactness gives $\varphi_{y,R}(y)\geq\eta_R>0$ uniformly in $y$ and $g$.

Decrease $b$ to at most one. On the interval centered at $y$, use the subsolution $q(t)\varphi_{y,R}$, extended by zero, where
\[
 q'=\lambda_{y,R}q-\amax q^2,
 \qquad q(0)=be^{-p(y+R)_+}.
\]
Since $\lambda_{y,R}\leq\amax$, we have $0\leq q\leq1$. Its residual is
$q^2\varphi_{y,R}(g\varphi_{y,R}-\amax)\leq0$; the zero extension has the subsolution sign at the boundary. It is below $u_0$ initially. The logistic formula and $\lambda_{y,R}-pv>0$ show that $q(t)$ is uniformly positive for $y\leq vt$ and large $t$. Thus $u(t,y)\geq\kappa_R>0$ behind $vt$.

For $0<v'<v$, compare over a fixed time with the homogeneous $\amin$ logistic equation on a sufficiently large fixed interval. Starting from $\kappa_R$, its center value can be made larger than any prescribed level below one. For large $t$, every such interval centered at $y\leq v't$ lies behind $v(t-T)$ at the starting time. Hence, by varying $v$ and then $v'$,
\begin{equation}\label{eq:lowerseed}
 \sup_{x\leq v't}|1-u(t,x)|\longrightarrow0,
 \qquad v'<\lambda_1/p.
\end{equation}
The assertion for nonpositive $v'$ follows from any positive intermediate speed. Since $p<L(\lambda_1)$, choose $v'$ between $\wmax$ and $\lambda_1/p$. Then \eqref{eq:fastcauchy} holds for every $s\in(0,1/2)$, and Theorem~\ref{thm:width} applies.
\end{proof}

\begin{remark}\label{rem:upperseed}
For $u_0(x)=e^{-px_+}$ with $0<p<L(\lambda_1)$, comparison with the linearized equation also gives
\[
 u(t,x)\leq e^{[d(p)+\amax]t-px},
 \qquad d(p)=\begin{cases}p^2,&\Sspace=\R,\\2(\cosh p-1),&\Sspace=\Z.\end{cases}
\]
Thus the lower {spreading speed} is at least $\lambda_1/p>\wmax$, and the upper {spreading speed} is finite. An exact common speed is not needed for Corollary~\ref{cor:slow}. 
{
In the continuous case, this existence follows from
Proposition~\ref{thm:LXZspreading}, and
Remark~\ref{rem:sublinearwidth} additionally yields
$W_s(t)=o(t)$.
}
\end{remark}

\begin{remark}[Localized perturbations]\label{rem:localizedwidth}
Consider $u_t=u_{xx}+a(x)f(u)$, where $a\geq1$ is bounded and H\"older continuous, and $a(x)=1$ for $|x|\geq R>0$. Assume $f\in C^2([0,1])$, $f(0)=f(1)=0$, $f'(0)=1$ and $0<f(u)\leq u$ for $0<u<1$. For $0<p<1$, set
\[
 w_p=p+p^{-1},\qquad u_0(x)=\min\{1,e^{-p(x-R)}\}.
\]
Let $\Phi_{w_p}$ be the homogeneous wave. Its tail is asymptotic to $A_pe^{-pz}$, so a sufficiently large $b$ makes $\Phi_{w_p}(x-R+b)\leq u_0(x)$. This wave is a subsolution since $a\geq1$. The function $\min\{1,e^{-p(x-w_pt-R)}\}$ is a supersolution for $t\geq0$: its constant branch covers the perturbation, its exponential branch lies where $a=1$, with residual $v-f(v)\geq0$, and its derivative jump has the supersolution sign. Therefore
\begin{equation}\label{eq:localizedsandwich}
 \Phi_{w_p}(x-w_pt-R+b)\leq u(t,x)
 \leq\min\{1,e^{-p(x-w_pt-R)}\},\qquad t\geq0.
\end{equation}
It follows that $\ls(t)=w_pt+O(1)$, $\rs(t)=w_pt+O(1)$, and $\sup_{t\geq0}W_s(t)<+\infty$ for every fixed $s\in(0,1/2)$.

Under the additional hypotheses of \citep[Theorem~1.4]{NRRZ12}, let $\lambda=\sup\sigma(\partial_{xx}+a)\in(1,2)$. Choosing $p<\sqrt{\lambda-1}$ gives $w_p>\lambda/\sqrt{\lambda-1}$, above the upper bound there for entire fronts. For logistic $f$, a sufficiently strong perturbation with $\lambda>2$ excludes all entire fronts \citep[Theorem~1.2]{NRRZ12}, while the same comparison still applies. Thus nonexistence of fast entire fronts alone does not imply unbounded positive-time width, in contrast with Theorem~\ref{thm:width} for almost periodic media.
\end{remark}

\section{Applications to quasiperiodic media}\label{sec:examples}

We verify $L(\lambda_1)>0$ for analytic quasiperiodic lattice
coefficients at large coupling and for a class of continuous
coefficients with two frequencies. The almost Mathieu model is a
special lattice example for which the spectral Lyapunov exponent is
explicit. In each case, Theorem~\ref{thm:front} and
Corollary~\ref{cor:slow} apply to every phase.

\subsection{Analytic quasiperiodic lattice coefficients at large coupling}
\label{sec:discreteexample}

Let $d\geq1$ and let $v:\T^d\to\R$ be nonconstant and real
analytic. Fix a Diophantine vector $\alpha\in\R^d$: there exist
$\gamma>0$ and $\tau>d$ such that
\[
 \operatorname{dist}(\langle k,\alpha\rangle,\Z)
 \geq\gamma |k|^{-\tau},\qquad k\in\Z^d\setminus\{0\}.
\]
For $\kappa>1$ and $A>\kappa\norm{v}_\infty$, consider
\begin{equation}\label{eq:generaldiscreteKPP}
 u_t(t,n)=\D u(t,n)+a_\theta(n)u(t,n)(1-u(t,n)),
 \qquad a_\theta(n)=A+\kappa v(\theta+n\alpha).
\end{equation}
Here $\Sspace=\Z$, and $\D\varphi(n)=\varphi(n+1)+
\varphi(n-1)-2\varphi(n)$. The coefficients are almost periodic
and satisfy $\inf_n a_\theta(n)>0$.
The rotation $\theta\mapsto\theta+\alpha$ is minimal and uniquely
ergodic. Its image in the coefficient hull has the same properties,
so the framework of Section~\ref{sec:setting} applies.

Let $\Sigma_\kappa^v$ denote the phase-independent spectrum of
$H_{\kappa,\alpha,\theta}^v$ in \eqref{eq:introSchrodinger}, and put
$E_+(\kappa)=\max\Sigma_\kappa^v$. Denote the spatial Lyapunov
exponent of $H_{\kappa,\alpha,\theta}^v\varphi=E\varphi$ by
$L_H(E)$, suppressing the fixed parameters $v$, $\alpha$ and
$\kappa$. Bourgain and Goldstein \citep[Theorem~2]{BG00}
prove that there exists $\kappa_0=\kappa_0(v,\alpha)>1$ such that
\begin{equation}\label{eq:BGpositive}
 L_H(E)>\tfrac12\log\kappa,
 \qquad E\in\R,\quad \kappa>\kappa_0.
\end{equation}
This is the positive-exponent theorem, and no conclusion about
Anderson localization is needed here.
Writing $\Lop_\theta:=\Lop_{a_\theta}$ as in the introduction,
the scalar shift between the operators gives
\begin{equation}\label{eq:discreteshift}
 \Lop_\theta=H_{\kappa,\alpha,\theta}^v+(A-2)I,
 \qquad \lambda_1=A-2+E_+(\kappa),
 \qquad L(\lambda_1)=L_H(E_+(\kappa)).
\end{equation}
In particular, $L(\lambda_1)>\frac12\log\kappa>0$.
Every transition front with positive global mean speed therefore
satisfies
\begin{equation}\label{eq:BGfrontbound}
 w\leq\frac{\lambda_1}{L_H(E_+(\kappa))}
 <\frac{2\lambda_1}{\log\kappa}
 \leq\frac{2(A+\kappa\max_{\T^d}v)}{\log\kappa}.
\end{equation}
The last inequality uses $\sup\sigma(\D+a_\theta)\leq
\sup_n a_\theta(n)$.
For $u_0(n)=e^{-pn_+}$, where $n_+=\max\{n,0\}$ and
$0<p<L_H(E_+(\kappa))$, Corollary~\ref{cor:slow} gives
\begin{equation}\label{eq:BGwidth}
 \limsup_{t\to+\infty}W_s(t)=+\infty,
 \qquad 0<s<1/2.
\end{equation}
In particular, $0<p<\frac12\log\kappa$ is sufficient.
These statements hold for every $\theta\in\T^d$, with the fixed
frequency vector above and $\kappa>\kappa_0$.

Take $d=1$ and $v(\theta)=2\cos(2\pi\theta)$. For this special
potential the Diophantine restriction is unnecessary: let
$\alpha\in\R\setminus\Q$, $\kappa>1$ and $A>2\kappa$, and set
\begin{equation}\label{eq:amo}
 a_\theta(n)=A+2\kappa\cos\bigl(2\pi(\theta+n\alpha)\bigr).
\end{equation}
Write $H_{\kappa,\alpha,\theta}=H_{\kappa,\alpha,\theta}^v$.
Its Lyapunov exponent equals $\log\kappa$ throughout its spectrum
\citep{Avila15,BJ02}. Thus \eqref{eq:discreteshift} gives
$L(\lambda_1)=\log\kappa$, and
\begin{equation}\label{eq:amobounds}
 w\leq\wmax=\frac{\lambda_1}{\log\kappa}
\end{equation}
for every transition front with positive global mean speed.
For $u_0(n)=e^{-pn_+}$ with $0<p<\log\kappa$,
\begin{equation}\label{eq:amopropagation}
 \limsup_{t\to+\infty}W_s(t)=+\infty,
 \qquad 0<s<1/2.
\end{equation}

These conclusions are independent of the spectral type.
For every irrational $\alpha$, evenness of the sampling function
and \citep{JS94} give purely singular continuous spectrum on a
dense $G_\delta$ set of phases when $\kappa>1$.
There are also regimes in which the spectrum is purely singular
continuous for every phase. Let $q_j$ be the continued-fraction
denominators of $\alpha$, and define
$ \beta(\alpha)=\limsup_{j\to\infty}
                 \frac{\log q_{j+1}}{q_j}.$
If $0<\beta(\alpha)\leq \infty$ and
$1<\kappa<e^{\beta(\alpha)}$,  the spectrum
is again purely singular continuous \citep{AYZ17}.
Thus neither \eqref{eq:amobounds} nor \eqref{eq:amopropagation}
requires a localized state at the spectral edge, or indeed any
$\ell^2$ eigenfunction. The condition that $\alpha$ is Liouvillean
alone does not imply the quantitative inequalities used here.

\subsection{Continuous quasiperiodic coefficients with two frequencies}
\label{sec:continuousexample}

Let $\alpha\in\R\setminus\Q$ satisfy
\[
 \operatorname{dist}(m\alpha,\Z)\geq\gamma |m|^{-\tau},
 \qquad m\in\Z\setminus\{0\},
\]
for some $\gamma>0$ and $\tau>1$. Let $V:\T^2\to\R$ be real
analytic, nonnegative, and attain its minimum value $0$ at only
finitely many points. For $\kappa>0$, $A>\kappa\max_{\T^2}V$
and $\theta=(\theta_1,\theta_2)\in\T^2$, set
\begin{equation}\label{eq:continuousgeneral}
 \begin{aligned}
 a_{\kappa,\theta}(x)&=A-\kappa V(\theta_1+x,\theta_2+\alpha x),\\
 H_{\kappa,\theta}&=-\partial_{xx}
             +\kappa V(\theta_1+x,\theta_2+\alpha x).
 \end{aligned}
\end{equation}
The reaction coefficient is uniformly positive. The irrational
flow with direction $(1,\alpha)$ is minimal and uniquely ergodic,
and the spectrum $\Sigma_\kappa$ of $H_{\kappa,\theta}$ is
independent of $\theta$. Write
$E_0(\kappa)=\inf\Sigma_\kappa$ and let $L_H(E)$ denote the
Lyapunov exponent of $H_{\kappa,\theta}\varphi=E\varphi$ per
unit length in $x$, with $V$, $\alpha$ and $\kappa$ fixed.

We use the low-energy part of You and Zhou's phase-transition
theorem \citep[Theorem~1.1]{YZ14}. More precisely, the
positive-exponent estimate used in its proof in Section~5,
based on Bjerkl\"ov's work \citep{Bjerklov06}, states that for
any sufficiently small fixed $\varepsilon>0$ there exist
$c_0>0$ and $\kappa_0>0$ such that
\begin{equation}\label{eq:YZpositive}
 L_H(E)\geq c_0\sqrt\kappa,
 \qquad E\in\Sigma_\kappa\cap(-\infty,\kappa^{1-\varepsilon}],
 \quad \kappa>\kappa_0.
\end{equation}
The constants can depend on $V$, $\alpha$ and $\varepsilon$.
It is this pointwise estimate that verifies the spectral hypothesis
of our main results.

For completeness, the spectral bottom lies in this low-energy
interval for all sufficiently large $\kappa$. Choose a minimum
$\theta_*\in\T^2$. Since $V(\theta_*)=0$ and
$\nabla V(\theta_*)=0$, Taylor's theorem gives
\[
 0\leq V(\theta_*+(x,\alpha x))\leq Cx^2
\]
for $|x|$ sufficiently small. Fix a real function
$\eta\in C_c^\infty((-1,1))$ with $\norm\eta_{L^2}=1$, and put
$\eta_\kappa(x)=\kappa^{1/8}\eta(\kappa^{1/4}x)$.
Then $\norm{\eta_\kappa}_{L^2}=1$, and, at phase $\theta_*$,
\[
 \begin{aligned}
 \langle\eta_\kappa,H_{\kappa,\theta_*}\eta_\kappa\rangle
 &\leq \int_\R |\eta_\kappa'(x)|^2\,dx
       +C\kappa\int_\R x^2|\eta_\kappa(x)|^2\,dx\\
 &\leq C_1\sqrt\kappa.
 \end{aligned}
\]
Nonnegativity, the variational principle and phase independence
of the spectrum imply
\begin{equation}\label{eq:continuumbottom}
 0\leq E_0(\kappa)\leq C_1\sqrt\kappa.
\end{equation}
Choose $\varepsilon<1/2$ as above. For large $\kappa$,
$C_1\sqrt\kappa<\kappa^{1-\varepsilon}$, so
\eqref{eq:YZpositive} applies at $E_0(\kappa)$.

Write $\Lop_{\kappa,\theta}:=\Lop_{a_{\kappa,\theta}}$.
The sign convention is important: our linearized KPP operator satisfies
\begin{equation}\label{eq:reflection}
 \Lop_{\kappa,\theta}=AI-H_{\kappa,\theta},\qquad
 \lambda_1=A-E_0(\kappa),\qquad
 L(\lambda_1)=L_H(E_0(\kappa))\geq c_0\sqrt\kappa>0.
\end{equation}
The equality of Lyapunov exponents follows because
$\Lop_{\kappa,\theta}\varphi=\lambda\varphi$ and
$H_{\kappa,\theta}\varphi=(A-\lambda)\varphi$ are the same
second-order equation, with the same spatial variable.
Consequently, every transition front with positive global mean
speed satisfies
\begin{equation}\label{eq:YZfrontbound}
 w\leq\frac{A-E_0(\kappa)}{L_H(E_0(\kappa))}
 \leq\frac{A}{c_0\sqrt\kappa}.
\end{equation}
For $u_0(x)=e^{-px_+}$ with $0<p<L_H(E_0(\kappa))$,
{Corollary~\ref{cor:slow} and Remark~\ref{rem:sublinearwidth} gives
\[
 \limsup_{t\to+\infty}W_s(t)=+\infty,
 \qquad
 \frac{W_s(t)}{t} \to 0,
 \qquad 0<s<1/2.
\]}
In particular, $0<p<c_0\sqrt\kappa$ is sufficient. All these
conclusions hold for every phase, once $\kappa$ is sufficiently
large.

\section{Further remarks}\label{sec:scope}

The strict inequality $w>\lambda_1/L(\lambda_1)$ is used in \eqref{eq:betacone}. Thus existence of a front at the upper endpoint remains open under our assumptions. For spreading solutions, it also remains to determine whether $W_s(t)$ tends to infinity and whether its growth rate can be estimated.

The proof uses self-adjointness in the sign argument, the defect pairing and the ground-state identity. It also uses the one-dimensional second-order structure to construct the two half-line solutions. For nonlocal dispersal or advection, corresponding adjoint and Harnack arguments would be needed. A constant diffusion coefficient $D>0$ can be included by replacing $\D$ with $D\D$ throughout, with $\lambda_1$ and $L(E)$ defined for that operator.

Almost periodicity enters in two different ways: minimality and compactness give a uniform local exponential growth estimate, while unique ergodicity allows the spatial translates associated with selected first arrival times to converge to a regular coefficient. The record indices in Lemma~\ref{lem:records} ensure that this selection retains the bound on past interface positions. The argument is deterministic and requires neither a pure point spectral decomposition nor localization of the parabolic semigroup. In particular, the singular continuous example in Section~\ref{sec:examples} shows that a localized eigenfunction is not the mechanism assumed by the theorem. The distinction from localized perturbations is illustrated by Remark~\ref{rem:localizedwidth}.

 \section*{Declaration on the Use of AI}

The human authors initiated and led the research program, formulated the mathematical problem, developed the main results, and independently verified all arguments. Through discussions with ChatGPT, the human authors obtained numerous references and explanations of their content; some of these references were relevant to the present work and proved useful in developing the proofs. ChatGPT also assisted in improving notation, grammar, and presentation. The final manuscript was carefully revised and approved by the human authors, who take full responsibility for its content.

\end{document}